\def\multichoose#1#2{\ensuremath{\left(\kern-.3em\left(\genfrac{}{}{0pt}{}{#1}{#2}\right)\kern-.3em\right)}}
\newcommand{\ba}{\begin{enumerate}[(a)]}
\newcommand{\ea}{\end{enumerate}}
\newcommand{\bi}{\begin{enumerate}[(i)]}
\newcommand{\ei}{\end{enumerate}}
\definecolor{resred}{RGB}{214,0,23}
	\def\set#1{\left\{ {#1} \right\}}
	\def\setof#1#2{{\left\{#1 \mid #2\right\}}}
        \def\sdefect{\operatorname{sdefect}}
	\def\P{{\mathbb P}}
	\def\N{{\mathbb N}}
	\def\Ass{{\text{Ass}}}
\renewcommand{\geq}{\geqslant}
\renewcommand{\ge}{\geqslant}
\renewcommand{\leq}{\leqslant}
\renewcommand{\le}{\leqslant}
\theoremstyle{plain}
\newtheorem{theorem}{Theorem}[section]
\newtheorem{corollary}[theorem]{Corollary}
\newtheorem{proposition}[theorem]{Proposition}
\newtheorem{lem}[theorem]{Lemma}
\newtheorem{ex}[theorem]{Example}
\theoremstyle{definition}
\newtheorem{example}[theorem]{Example}
\newtheorem{definition}[theorem]{Definition}
\newtheorem{question}[theorem]{Question}
\newtheorem{remark}[theorem]{Remark}
\title{Comparing Powers of Edge Ideals}
\author{Mike Janssen}
\address{Mathematics/Statistics Department, Dordt College, Sioux Center, IA 51250, USA}
\email{mike.janssen@dordt.edu}
\author{Thomas Kamp}
\address{Mathematics/Statistics Department, Dordt College, Sioux Center, IA 51250, USA}
\email{thmskmp@dordt.edu}
\author{Jason Vander Woude}
\address{Department of Mathematics, University of Nebraska-Lincoln,
Lincoln, NE 68588-0130, USA}
\email{jasonvw@huskers.unl.edu}
\begin{document}

\maketitle

\begin{abstract}
	Given a nontrivial homogeneous ideal $I\subseteq k[x_1,x_2,\ldots,x_d]$, a problem of great recent interest has been the comparison of the $r$th ordinary power of $I$ and the $m$th symbolic power $I^{(m)}$.
	This comparison has been undertaken directly via an exploration of which exponents $m$ and $r$ guarantee the subset containment $I^{(m)}\subseteq I^r$ and asymptotically via a computation of the resurgence $\rho(I)$, a number for which any $m/r > \rho(I)$ guarantees $I^{(m)}\subseteq I^r$.
	Recently, a third quantity, the symbolic defect, was introduced; as $I^t\subseteq I^{(t)}$, the symbolic defect is the minimal number of generators required to add to $I^t$ in order to get $I^{(t)}$.
	
	We consider these various means of comparison when $I$ is the edge ideal of certain graphs by describing an ideal $J$ for which $I^{(t)} = I^t + J$.
	When $I$ is the edge ideal of an odd cycle, our description of the structure of $I^{(t)}$ yields solutions to both the direct and asymptotic containment questions, as well as a partial computation of the sequence of symbolic defects.
\end{abstract}



\section{Introduction}


Let $k$ be an algebraically closed field, and $I$ a nonzero proper homogeneous ideal in $S = k[x_0,x_1,x_2,\ldots,x_N]$.
Recall that the $m$th symbolic power of $I$ is the ideal
\[
	I^{(m)} = S \cap \left(\bigcap\limits_{P\in \Ass(I)} I^m S_P\right).
\]
Over the last 10--15 years, the structure of $I^{(m)}$ has been an object of ongoing study; see, e.g., the recent survey \cite{2017DaoEtAl}.
One avenue for this study has been the examination of the relationship between $I^{(m)}$ and the well-understood algebraic structure of $I^r$, the $r$th ordinary power of $I$.
The naive context in which to examine this relationship is via subset containments, i.e., for which $m$ and $r$, $s$, and $t$ do we have $I^{(m)}\subseteq I^r$ and $I^{s} \subseteq I^{(t)}$?
In fact, this line of inquiry has been extremely productive.
It is straightforward to see that $I^s \subseteq I^{(t)}$ if and only if $s \ge t$, but determining which $r$ and $m$ give $I^{(m)} \subseteq I^r$ is more delicate.

Seminal results of Ein-Lazarsfeld-Smith and Hochster-Huneke \cite{2001:einlazarsfeldsmith,2002:hochsterhuneke-invent} established that for such ideals, $I^{(m)} \subseteq I^r$ if $m/r \ge N$.
Additional information about the ideal under consideration generally leads to tighter results (see, e.g., \cite{2014:boccicooperharbourne,2013:DenkertJanssen,2013:dumnicki}).
This phenomenon led to Bocci and Harbourne's introduction of a quantity known as the \emph{resurgence} of $I$, denoted $\rho(I)$; it is the least upper bound of the set $T = \setof{m/r}{I^{(m)}\not\subseteq I^r}$.
Thus, if $m/r > \rho(I)$, we have $I^{(m)}\subseteq I^r$.

Recently, Galetto, Geramita, Shin, and Van Tuyl introduced a new measure of the difference between $I^{(m)}$ and $I^m$ known as the symbolic defect.
Since $I^m\subseteq I^{(m)}$, the quotient $I^{(m)}/I^m$ is a finite $S$-module; thus, we let $\sdefect(I,m)$ denote the number of minimal generators of $I^{(m)}/I^m$ as an $S$-module.
This is known as the symbolic defect, and the symbolic defect sequence is the sequence $\set{\sdefect(I,m)}_{m\in\N}$.
In \cite{2016:galettogeramitavantuyl}, the authors study the symbolic defect sequences of star configurations in $\P^n_k$ and homogeneous ideals of points in $\P^2_k$.

Our work considers all these questions in the context of a class of edge ideals.
Let $G = (V,E)$ be a (simple) graph on the vertex set $V = \set{x_1,x_2,\ldots,x_n}$ with edge set $E$.
The edge ideal of $G$, introduced in \cite{Villarreal1990}, is the ideal $I(G)\subseteq R = k[x_1,x_2,\ldots,x_n]$ given by
\[
	I(G) = (\setof{x_i x_j}{\set{x_i, x_j}\in E}).
\]
That is, $I(G)$ is generated by the products of pairs of those variables between which are edges in $G$.

In \cite{1994:simisvasconcelosvillareal}, the authors establish that, for an edge ideal $I = I(G)$, we have $I^{(m)} = I^m$ for all $m \ge 1$ if and only if $G$ is bipartite.
A natural question, then, is to explore the relationship between $I(G)^{(m)}$ and $I(G)^r$ when $G$ is not bipartite, which is equivalent to $G$ containing an odd cycle.
Thus, \cite{2004:worthenelliswilson} sought to explore this relationship when $G = C_{2n+1}$ is a cycle on $2n+1$ vertices.

We continue the problem of exploring the structure of the symbolic power $I(G)^{(t)}$ for certain classes of graphs $G$, with a focus on when $G$ is an odd cycle.
The main results of this work are Theorem \ref{lem:tl<it} and Corollary \ref{thm:sorta}, which together describe a decomposition of the form $I^{(t)} = I^t + J$, where $J$ is a well-understood ideal. 
We are then able to use this decomposition to resolve  \cite[Conjecture 15]{2004:worthenelliswilson}, compute $\rho(I(C_{2n+1}))$ in Theorem \ref{thm:resurgence}, and establish a partial symbolic defect sequence in Theorem \ref{thm:sdefects}.
We close by showing that our ideas in Theorem \ref{lem:tl<it} apply for complete graphs and graphs which consist of an odd cycle plus an additional vertex and edge.

\subsection*{Remark}
	As preparation of this manuscript was concluding in summer 2017, Dao et.~al posted the preprint \cite{2017DaoEtAl}.
	In particular, their Theorem 4.13 bears a striking resemblance to our Corollary \ref{cor:ordsymequal}. 
	While these similarities are worth noting, in part as evidence that interest in symbolic powers is high, it is also worth noting that the aims of these two works are distinct and complementary. 
	The aim of the relevant sections of  \cite{2017DaoEtAl} is to investigate the packing property for edge ideals, while ours is to more directly describe the difference between the ordinary and symbolic powers by investigating the structure of a set of minimal generators for $I^{(t)}$.
	We then use information about these generators to compute invariants related to the containment $I^{(m)}\subseteq I^r$.

\section{Background results}

Edge ideals are an important class of examples of squarefree monomial ideals, i.e., an ideal generated by elements of the form $x_1^{a_1} x_2^{a_2} \cdots x_n^{a_n}$, where $a_i \in \set{0,1}$ for all $i$.
When $I$ is squarefree monomial, it is well-known that the minimal primary decomposition is
\[
	I = P_1 \cap \cdots \cap P_r, \text{ with } P_j = (x_{j_1}, \ldots, x_{j_{s_j}}) \text{ for } j = 1, \ldots, r.
\]
When $I = I(G)$ is an edge ideal, the variables in the $P_j$'s are precisely the vertices in the minimal vertex covers of $G$.
Recall that, given a graph $G = (V,E)$, a \emph{vertex cover} of $G$ is a subset $V'\subseteq V$ such that for all $e\in E$, $e\cap V' \ne \emptyset$.
A \emph{minimal vertex cover} is a vertex cover minimal with respect to inclusion.

\begin{lem}[\cite{VanTuyl2013}, Corollary 3.35]\label{lem:minprimevertexcover}
	Let $G$ be a graph on the vertices $\set{x_1,x_2,\ldots,x_n}$, $I = I(G) \subseteq k[x_1,x_2,\ldots,x_n]$ be the edge ideal of $G$ and $V_1, V_2, \ldots, V_r$ the minimal vertex covers of $G$.
	Let $P_j$ be the monomial prime ideal generated by the variables in $V_j$.
	Then
	\[
		I = P_1 \cap P_2 \cap \cdots \cap P_r
	\]
	and
	\[
		I^{(m)} = P_1^m \cap P_2^m \cap \cdots \cap P_r^m.
	\]
\end{lem}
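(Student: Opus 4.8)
The plan is to establish the two displayed equalities separately, both relying on standard facts about squarefree monomial ideals together with the definition of the symbolic power.

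For the first equality $I = P_1 \cap \cdots \cap P_r$, I would argue that this is precisely the minimal primary decomposition of the squarefree monomial ideal $I(G)$, and the content is the identification of the associated primes with the minimal vertex covers. First I would recall that a monomial $x_ix_j$ lies in $I(G)$ exactly when $\set{x_i,x_j}\in E$. A monomial prime $P_j = (x_{j_1},\ldots,x_{j_{s_j}})$ contains $I(G)$ if and only if every generator $x_ix_j$ of $I(G)$ lies in $P_j$, which (for a prime generated by variables) holds iff at least one of $x_i, x_j$ belongs to $\set{x_{j_1},\ldots,x_{j_{s_j}}}$ --- that is, iff the vertex set $V_j$ meets every edge, i.e.\ $V_j$ is a vertex cover. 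The minimal such primes correspond to the minimal vertex covers. Since $I(G)$ is a radical ideal (being squarefree), it equals the intersection of the minimal primes over it, giving $I = P_1 \cap \cdots \cap P_r$. This step is essentially bookkeeping and can be cited to \cite[Corollary 3.35]{VanTuyl2013}.

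For the second equality, I would unwind the definition of the symbolic power. By definition $I^{(m)} = R \cap \bigcap_{P\in\Ass(I)} I^m R_P$. Since $I$ is squarefree monomial, $\Ass(I) = \set{P_1,\ldots,P_r}$ consists exactly of the minimal primes found above (there are no embedded primes). The key localization computation is that for each minimal prime $P_j$, the contraction $R \cap I^m R_{P_j}$ equals the $P_j$-primary component, and for a monomial ideal with this structure that component is precisely $P_j^m$; intersecting over all $j$ yields $I^{(m)} = P_1^m \cap \cdots \cap P_r^m$. I would verify the local statement by noting that after localizing at $P_j$, the variables not in $V_j$ become units, so $I R_{P_j} = P_j R_{P_j}$, whence $I^m R_{P_j} = P_j^m R_{P_j}$, and contracting gives $P_j^m$ since $P_j$ is generated by a subset of the variables.

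**The main obstacle** is the contraction step $R \cap P_j^m R_{P_j} = P_j^m$ in the second equality: one must be careful that the monomial structure is preserved under localization and that no extra elements are introduced upon contracting. This amounts to checking that $P_j^m$ is already $P_j$-primary and that its localization-then-contraction returns the same ideal, which is standard for monomial ideals generated by subsets of the variables but is the only place requiring genuine commutative-algebra input rather than combinatorial translation. Everything else reduces to the dictionary between edge-generators and vertex covers.
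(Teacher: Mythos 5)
The paper gives no proof of this lemma at all: it is quoted as a background fact, with the decomposition into minimal primes attributed to \cite[Corollary 3.35]{VanTuyl2013} and the symbolic-power formula taken as standard for squarefree monomial ideals. Your blind attempt therefore cannot match ``the paper's approach''; what you have written is the standard self-contained argument, and it is essentially correct. Two details are worth tightening. First, when you reduce to primes generated by variables you are implicitly using that every minimal prime of a monomial ideal is itself a monomial prime; this is standard but should be cited or checked, since otherwise the dictionary ``primes over $I(G)$ generated by variables $\leftrightarrow$ vertex covers'' does not a priori exhaust $\operatorname{Min}(I)$. Second, the step $I R_{P_j} = P_j R_{P_j}$ is where the \emph{minimality} of the vertex cover actually enters: for each $x_{j_k}\in V_j$, minimality guarantees an edge $\set{x_{j_k},y}$ with $y\notin V_j$, so $y$ is a unit in $R_{P_j}$ and $x_{j_k} = (x_{j_k}y)y^{-1}\in IR_{P_j}$; without this, one only gets $IR_{P_j}\subseteq P_jR_{P_j}$. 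With those two points made explicit, and the observation that $P_j^m$ is $P_j$-primary (so that contraction returns exactly $P_j^m$), your argument is a complete proof of a statement the paper merely cites.
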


Symbolic powers of squarefree monomial ideals (and, more specifically, edge ideals) have enjoyed a great deal of recent interest (see, e.g., \cite{2013:cooperembreehahoefel,2016:BocciMFO}).
In \cite{2016:BocciMFO}, a linear programming approach is used to compute invariants related to the containment question.
We adapt this technique in Lemma \ref{lem:alphasubprog} for the edge ideals under consideration in this paper.
One result of \cite{2016:BocciMFO} which will be of use is the following. 

\begin{lem}\label{lem:jdef}
Let $I \subseteq R$ be a squarefree monomial ideal with minimal primary decomposition $I = P_1 \cap P_2 \cap \cdots \cap P_r$ with $P_j = ( x_{j_1},\ldots,x_{j_{s_j}} )$ for $j = 1,\ldots,r$. Then
$x_{a_1}\cdots x_{a_n} \in I^{(m)}$ if and only if $a_{j_1}+\ldots+a _{j_{s_j}} \geq m$ for $j=1,\ldots,r$.
\end{lem}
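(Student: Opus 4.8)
The plan is to reduce the membership question to the intersection formula $I^{(m)} = P_1^m \cap \cdots \cap P_r^m$ and then characterize membership of a monomial in each $P_j^m$ by a single linear inequality on its exponent vector. Throughout I write the monomial as $x^{\mathbf a} = x_1^{a_1}\cdots x_n^{a_n}$, so that $\mathbf a = (a_1,\ldots,a_n)$ is its exponent vector and the quantity $a_{j_1}+\cdots+a_{j_{s_j}}$ is the total exponent carried by the variables generating $P_j$.

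First I would establish that $I^{(m)} = \bigcap_{j=1}^{r} P_j^m$. Since $I$ is squarefree monomial it is radical, so $\Ass(R/I)$ consists exactly of the minimal primes $P_1,\ldots,P_r$ with no embedded components. Moreover, each $P_j = (x_{j_1},\ldots,x_{j_{s_j}})$ is generated by a subset of the variables, and for such a prime the ordinary power $P_j^m$ is already $P_j$-primary, so $P_j^{(m)} = P_j^m$. Feeding these two facts into the defining formula $I^{(m)} = R \cap \bigcap_{P\in\Ass(I)} I^m R_P$ and identifying each localized component with $P_j^m$ yields the displayed intersection. This is precisely the analogue, for an arbitrary squarefree monomial ideal, of the edge-ideal statement in Lemma \ref{lem:minprimevertexcover}, and the same localization argument applies.

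Second, I would prove the combinatorial claim that $x^{\mathbf a} \in P_j^m$ if and only if $a_{j_1}+\cdots+a_{j_{s_j}} \geq m$. Because $P_j^m$ is a monomial ideal generated by the monomials $x_{j_1}^{b_1}\cdots x_{j_{s_j}}^{b_{s_j}}$ with $b_1+\cdots+b_{s_j}=m$, membership of $x^{\mathbf a}$ is equivalent to divisibility by one such generator. For the forward direction, divisibility forces $b_\ell \leq a_{j_\ell}$ for each $\ell$, so summing gives $\sum_\ell a_{j_\ell} \geq m$. For the converse, if $\sum_\ell a_{j_\ell} \geq m$ then one can greedily choose exponents $0 \leq b_\ell \leq a_{j_\ell}$ with $\sum_\ell b_\ell = m$, producing a generator of $P_j^m$ that divides $x^{\mathbf a}$.

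Intersecting over $j$ then finishes the argument: $x^{\mathbf a} \in I^{(m)} = \bigcap_j P_j^m$ holds if and only if $x^{\mathbf a} \in P_j^m$ for every $j$, which by the claim is exactly the system $a_{j_1}+\cdots+a_{j_{s_j}} \geq m$ for $j=1,\ldots,r$. The only genuinely delicate step is the first one, namely justifying $P_j^{(m)} = P_j^m$ for variable-generated primes and assembling the global intersection from the localization definition of $I^{(m)}$; once that reduction is in hand, the membership characterization for a single power of a monomial prime is routine.
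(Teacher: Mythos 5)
Your proposal is correct. Note that the paper does not actually prove Lemma \ref{lem:jdef} at all --- it is quoted as a background result from the cited work of Bocci et al., so there is no in-paper argument to compare against. Your two-step reduction (first $I^{(m)} = P_1^m \cap \cdots \cap P_r^m$, using that $I$ is radical so $\Ass(I)$ is exactly the minimal primes and that a power of a variable-generated prime is already primary; then the divisibility criterion for membership of a monomial in $P_j^m$) is the standard proof, and the only point worth spelling out slightly more is the identification $I^m R_{P_j} \cap R = P_j^m$, which rests on $I R_{P_j} = P_j R_{P_j}$ because the other minimal primes blow up to the unit ideal in $R_{P_j}$ --- exactly the localization argument you invoke by analogy with Lemma \ref{lem:minprimevertexcover}.
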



\begin{remark}
	Throughout this work, we will be exploring questions about ideals in $R = k[x_1,x_2,\ldots,x_n]$ related to graphs on the vertex set $\set{x_1,x_2,\ldots,x_n}$.
	We will use the $x_i$'s interchangeably to represent both vertices and variables.
	The specific use should be clear from the context, and we see this as an opportunity to emphasize the close connection between the graph and the ideal.
\end{remark}





\section{Factoring monomials along odd cycles}

In this section, we introduce the main ideas of our approach to studying symbolic powers of edge ideals.
We define a means of writing a monomial in a power of an edge ideal with respect to the minimal vertex covers of the graph and study the properties of this representation.
In what follows, let $R = k[x_1,x_2,\ldots,x_{2n+1}]$ and let $I = I(C_{2n+1})$ be the edge ideal of the odd cycle $C_{2n+1}$. 

\begin{definition}\label{defn:optimal}
Let $m\in k[x_1,x_2,\ldots,x_{2n+1}]$ be a monomial.
	Let $e_j$ denote the monomial representing the $j$th edge in the cycle, i.e., $e_j = x_j x_{j+1}$ for $1\le j \le 2n$, and $e_{2n+1} = x_{2n+1} x_1$.
	We may then write 
	$$
		m = x_1^{a_1} x_2^{a_2} \cdots x_{2n+1}^{a_{2n+1}} e_1^{b_1} e_2^{b_2} \cdots e_{2n+1}^{b_{2n+1}},
	$$
	where $b(m) := \sum b_j$ is as large as possible (observe $0\le 2b(m) \le \deg(m)$) and $a_i \ge 0$. 
	When $m$ is written in this way, we will call this an \emph{optimal factorization} of $m$, or say that $m$ is expressed in \emph{optimal form}. In addition, each $x_i^{a_i}$ with $a_i > 0$ in this form will be called an \emph{ancillary factor} of the optimal factorization, or just an \emph{ancillary} for short.
\end{definition}


Observe that the optimal form representation of $m$ is not unique in the sense that different edges may appear as factors of $m$; for example, in $k[x_1,\ldots,x_5]$, if $m = x_1^2 x_2^2 x_3 x_4 x_5$ we may write $m = x_1 e_1 e_2 e_4 = x_2 e_1 e_3 e_5$.

\begin{lem}\label{lem:splitm}
Let $m = x_1^{a_1} x_2^{a_2} \cdots x_{2n+1}^{a_{2n+1}} e_1^{b_1} e_2^{b_2} \cdots e_{2n+1}^{b_{2n+1}} \in I(C_{2n+1})$ be an optimal factorization. 
Then any $m' =  x_1^{a'_1} x_2^{a'_2} \cdots x_{2n+1}^{a'_{2n+1}} e_1^{b'_1} e_2^{b'_2} \cdots e_{2n+1}^{b'_{2n+1}}$ will also be an optimal factorization if $0 \leq a'_i \leq a_i$ and $0 \leq b'_j \leq b_j$ for all $i$ and $j$.
\end{lem}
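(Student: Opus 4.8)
The plan is to argue by contradiction, using the fact that ``optimal'' is a maximality condition that can be transported between a monomial and its divisors. Throughout, read indices cyclically modulo $2n+1$, so that the exponent of $x_i$ in $m$ is $d_i = a_i + b_{i-1} + b_i$. By Definition~\ref{defn:optimal}, a factorization of a monomial is optimal exactly when its edge count $\sum_j b_j$ is as large as possible among all expansions of that monomial into ancillaries and edges. Since the displayed $m'$ is a genuine monomial equipped with such an expansion, it has a well-defined optimal edge count, and the assertion to prove is precisely that $\sum_j b_j'$ achieves it.

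Suppose it does not. Then $m'$ admits another expansion $m' = x_1^{a_1''}\cdots x_{2n+1}^{a_{2n+1}''} e_1^{b_1''}\cdots e_{2n+1}^{b_{2n+1}''}$ with $\sum_j b_j'' > \sum_j b_j'$. The crucial move is to multiply this improved expansion by the complementary factor
\[
	\prod_{i} x_i^{\,a_i - a_i'}\ \prod_{j} e_j^{\,b_j - b_j'},
\]
whose exponents are nonnegative precisely because $a_i' \le a_i$ and $b_j' \le b_j$. A direct exponent comparison shows this complementary factor is an ancillary--edge expansion of the monomial $m/m'$, so the product of the two pieces is an ancillary--edge expansion of $m$ itself, namely the one with ancillaries $a_i'' + (a_i - a_i')$ and edge multiplicities $b_j'' + (b_j - b_j')$. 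Its edge count is $\sum_j b_j'' + \sum_j (b_j - b_j')$, which by $\sum_j b_j'' > \sum_j b_j'$ strictly exceeds $\sum_j b_j' + \sum_j (b_j - b_j') = \sum_j b_j = b(m)$. This contradicts the optimality of the original factorization of $m$, so no such improved expansion of $m'$ exists.

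The only computations are routine and I would record them in a line rather than expand them: the exponent of $x_i$ in the complementary factor is $(a_i - a_i') + (b_{i-1} - b_{i-1}') + (b_i - b_i') = d_i - d_i'$, which simultaneously confirms that the two pieces multiply back to $m$ and that all exponents remain nonnegative. The one point worth a sentence of care is that the improved expansion of $m'$ may involve edges outside the support of the original $b_j'$; this is harmless, since the complementary factor is built solely from the fixed differences $b_j - b_j'$ and is simply superimposed on whatever edges that expansion uses. I therefore expect the lifting argument above to constitute the entire proof, with no substantive combinatorial obstacle to overcome.
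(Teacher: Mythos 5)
Your argument is correct and is essentially the same as the paper's: both form the complementary factor $\prod_i x_i^{a_i-a_i'}\prod_j e_j^{b_j-b_j'}$, multiply it against a hypothetical better expansion of $m'$, and derive a contradiction with the optimality of $m$. No substantive difference.
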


\begin{proof}
Let $m' =  x_1^{a'_1} x_2^{a'_2} \cdots x_{2n+1}^{a'_{2n+1}} e_1^{b'_1} e_2^{b'_2} \cdots e_{2n+1}^{b'_{2n+1}}$ such that for all $i,j$, $0 \leq a'_i \leq a_i$ and $0 \leq b'_j \leq b_j$. 
Since each exponent of $m'$ is less than or equal to the corresponding exponent from $m$, we know that $m'$ divides $m$. 
Thus, there must exist some 
\[
	m'' = x_1^{(a_1-a'_1)} x_2^{(a_2-a'_2)} \cdots x_{2n+1}^{(a_{2n+1}-a'_{2n+1})} e_1^{(b_1-b'_1)} e_2^{(b_2-b'_2)} \cdots e_{2n+1}^{(b_{2n+1}-b'_{2n+1})}
\] 
such that $m = m'm''$.

Suppose that $m'$ is not in optimal form. 
Then we can re-express $m'$ as 
\[
m'=x_1^{c_1} x_2^{c_2} \cdots x_{2n+1}^{c_{2n+1}} e_1^{d_1} e_2^{d_2} \cdots e_{2n+1}^{d_{2n+1}},
\] 
such that $\sum b'_i < \sum d_i$. 
As $$m = m'm'' = \prod  x_i^{(a_i-a'_i+c_i)} e_i^{(b_i-b'_i+d_i)},$$ $m$ has an edge exponent sum of $\sum (b_i-b'_i+d_i) = \sum b_i - \sum b'_i + \sum d_i$. 
As $\sum b'_i < \sum d_i$, it must be true that $ \sum (b_i-b'_i+d_i) > \sum b_i$.

\end{proof}

The next lemma describes a process that will be critical in the proof of Theorem \ref{lem:it<tl}.
Intuitively, it says that if a monomial is factored along a path of an odd number of consecutive edges with ancillaries on both ends of this path of edges, the monomial is not written in optimal form, i.e., it can be rewritten as a product of strictly more edges.
Before stating and proving the lemma, we illustrate the process with an example.

\begin{example}\label{ex:first}
Let $G$ be a cycle with 111 vertices 
and consider $m = x_1^3 x_2^4 x_3^2 x_4^3 x_5^5 x_6^3 x_7^2 x_8^2\in I(G)$ with edge factorization: $$m = x_1  e_1^2 e_2^2 e_4^3 e_5^2 e_6 e_7 x_8$$ where  $e_i = x_i x_{i+1}$. Note that in this factorization, there is an ancillary at $x_1$ and $x_8$. We will show that $m$ is not in optimal form.

We can graphically represent $m$ by drawing an edge between $x_i$ and $x_{i+1}$ for each $e_i$ in $m$ and creating a bold outline for each ancillary, as shown below:

\begin{center}
\begin{tikzpicture}[scale=1.6,auto=left,every node/.style={circle,fill=blue!20,inner sep=4}]
	\node[draw=blue!40!black,line width=4,inner sep=3pt] (n1) at (0,  0) {$x_1$};
	\node (n2) at (1,  0)  {$x_2$};
	\node (n3) at (2, 0)  {$x_3$};
	\node (n4) at (3, 0) {$x_4$};
	\node (n5) at (4,  0)  {$x_5$};
	\node (n6) at (5,  0)  {$x_6$};
	\node (n7) at (6,  0)  {$x_7$};
	\node[draw=blue!40!black,line width=4,inner sep=3pt]  (n8) at (7,  0)  {$x_8$};
  \draw (n1) to [bend right = -5] (n2);
  \draw (n1) to [bend right = 5] (n2);
  \draw (n2) to [bend right = 5] (n3);
  \draw (n2) to [bend right = -5] (n3);
  \draw (n4) to [bend right = -9] (n5);
  \draw (n4) to [bend right = 0] (n5);
  \draw (n4) to [bend right = 9] (n5);
  \draw (n7) to [bend right = 0] (n6);
  \draw (n6) to [bend right = -5] (n5);
  \draw (n6) to [bend right = 5] (n5);
  \draw (n7) to [bend right = 0] (n8);
\end{tikzpicture}
\end{center}

Using a method described more fully in Lemma \ref{lem:evens}, we will ``break'' each of the red (bolded) edges back into standard $x_i$ notation so that we create new ancillaries at every vertex.

\begin{center}
\begin{tikzpicture}[scale=1.6,auto=left,every node/.style={circle,fill=blue!20,inner sep=4}]
	\node[draw=blue!40!black,line width=4,inner sep=3pt] (n1) at (0,  0) {$x_1$};
	\node (n2) at (1,  0)  {$x_2$};
	\node (n3) at (2, 0)  {$x_3$};
	\node (n4) at (3, 0) {$x_4$};
	\node (n5) at (4,  0)  {$x_5$};
	\node (n6) at (5,  0)  {$x_6$};
	\node (n7) at (6,  0)  {$x_7$};
	\node[draw=blue!40!black,line width=4,inner sep=3pt]  (n8) at (7,  0)  {$x_8$};
  \draw (n1) to [bend right = -5] (n2);
  \draw (n1) to [bend right = 5] (n2);
  \draw (n2) to [bend right = 5] (n3);
  \draw[red!90!black, line width = 2] (n2) to [bend right = -5] (n3);
  \draw[red!90!black, line width = 2] (n4) to [bend right = -9] (n5);
  \draw (n4) to [bend right = 0] (n5);
  \draw (n4) to [bend right = 9] (n5);
  \draw[red!90!black, line width = 2] (n7) to [bend right = 0] (n6);
  \draw (n6) to [bend right = -5] (n5);
  \draw (n6) to [bend right = 5] (n5);
  \draw (n7) to [bend right = 0] (n8);
\end{tikzpicture}

\begin{tikzpicture}[fill=black,ultra thick, scale = .22, transform shape,font=\Large]
\draw [fill = black] (.3,1) -- (2.7,1) -- (1.5,0) -- cycle;
\draw [fill = black] (1,1) -- (2,1) -- (2,2) -- (1,2) -- cycle;
\end{tikzpicture}

\begin{tikzpicture}[scale=1.6,auto=left,every node/.style={circle,fill=blue!20,draw=blue!40!black,line width=4,inner sep=3pt}]
	\node (n1) at (0,  0) {$x_1$};
	\node (n2) at (1,  0)  {$x_2$};
	\node (n3) at (2, 0)  {$x_3$};
	\node (n4) at (3, 0) {$x_4$};
	\node (n5) at (4,  0)  {$x_5$};
	\node (n6) at (5,  0)  {$x_6$};
	\node (n7) at (6,  0)  {$x_7$};
	\node  (n8) at (7,  0)  {$x_8$};
  \draw (n1) to [bend right = -5] (n2);
  \draw (n1) to [bend right = 5] (n2);
  \draw (n2) to [bend right = 0] (n3);
  \draw (n4) to [bend right = -5] (n5);
  \draw (n4) to [bend right = 5] (n5);
  \draw (n6) to [bend right = -5] (n5);
  \draw (n6) to [bend right = 5] (n5);
  \draw (n7) to [bend right = 0] (n8);
\end{tikzpicture}
\end{center}

Note that if we define a new monomial $p$ based on this graphical representation, where $p = x_1 x_2 x_3 x_4 x_5 x_6 x_7 x_8 e_1^2 e_2 e_4^2 e_5^2 e_7$, we have $m = p$ because we are merely changing the factorization of the monomial $m$, not its value.

As one can see, there are now 8 consecutive ancillaries, which we can pair up in a new way, as shown below. New edges are  highlighted in green (bolded in the second line).

\begin{center}
\begin{tikzpicture}[scale=1.6,auto=left,every node/.style={circle,fill=blue!20,draw=blue!40!black,line width=4,inner sep=3pt}]
	\node (n1) at (0,  0) {$x_1$};
	\node (n2) at (1,  0)  {$x_2$};
	\node (n3) at (2, 0)  {$x_3$};
	\node (n4) at (3, 0) {$x_4$};
	\node (n5) at (4,  0)  {$x_5$};
	\node (n6) at (5,  0)  {$x_6$};
	\node (n7) at (6,  0)  {$x_7$};
	\node  (n8) at (7,  0)  {$x_8$};
  \draw (n1) to [bend right = -5] (n2);
  \draw (n1) to [bend right = 5] (n2);
  \draw (n2) to [bend right = 0] (n3);
  \draw (n4) to [bend right = -5] (n5);
  \draw (n4) to [bend right = 5] (n5);
  \draw (n6) to [bend right = -5] (n5);
  \draw (n6) to [bend right = 5] (n5);
  \draw (n7) to [bend right = 0] (n8);
\end{tikzpicture}

\begin{tikzpicture}[fill=black,ultra thick, scale = .22, transform shape,font=\Large]
\draw [fill = black] (.3,1) -- (2.7,1) -- (1.5,0) -- cycle;
\draw [fill = black] (1,1) -- (2,1) -- (2,2) -- (1,2) -- cycle;
\end{tikzpicture}

\begin{tikzpicture}[scale=1.6,auto=left,every node/.style={circle,fill=blue!20,inner sep=4}]
	\node (n1) at (0,  0) {$x_1$};
	\node (n2) at (1,  0)  {$x_2$};
	\node (n3) at (2, 0)  {$x_3$};
	\node (n4) at (3, 0) {$x_4$};
	\node (n5) at (4,  0)  {$x_5$};
	\node (n6) at (5,  0)  {$x_6$};
	\node (n7) at (6,  0)  {$x_7$};
	\node (n8) at (7,  0)  {$x_8$};
  \draw[green!80!black, line width = 2] (n1) to [bend right = -9] (n2);
  \draw (n1) to [bend right = 9] (n2);
  \draw (n1) to [bend right = 0] (n2);
  \draw (n2) to [bend right = 0] (n3);
  \draw[green!80!black, line width = 2] (n3) to [bend right = 0] (n4);
  \draw (n4) to [bend right = -5] (n5);
  \draw (n4) to [bend right = 5] (n5); 
  \draw[green!80!black, line width = 2] (n6) to [bend right = 9] (n5);
  \draw (n6) to [bend right = -9] (n5);
  \draw (n6) to [bend right = 0] (n5);
  \draw[green!80!black, line width = 2] (n7) to [bend right = -5] (n8);
  \draw (n7) to [bend right = 5] (n8);
\end{tikzpicture}
\end{center}

Now we have a third possible representation $q$ of this monomial. Note that $q = e_1^3 e_2 e_3 e_4^2 e_5^3 e_7^2$ and $q=p=m$. As you can see, this monomial representation has one more edge than our original representation, which means that $m$ is not optimal.
\end{example}

\begin{lem}\label{lem:evens}
Let $m = x_j^{a_j} e_j^{b_j} e_{j+1}^{b_{j+1}} \cdots e_{j+2k}^{b_{j+2k}} x_{j+2k+1}^{a_{j+2k+1}}$, where $a_j, a_{j+2k+1} \geq 1$. If it is the case that $b_{j+2h+1} \geq 1$ for all $h \in \{0,\ldots,k-1\}$, then $m$ is not in optimal form.
\end{lem}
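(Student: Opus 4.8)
The plan is to prove the contrapositive-flavored statement directly: I would exhibit a \emph{different} factorization of the very same monomial $m$ that uses strictly more edges, which by Definition \ref{defn:optimal} immediately shows that the displayed factorization does not maximize $b(m)$ and hence is not optimal. The guiding idea is an alternating augmentation along the run of edges $e_j, e_{j+1}, \ldots, e_{j+2k}$, in the spirit of an augmenting path from matching theory: the two ancillaries play the role of the free endpoints, and the run of edges plays the role of the alternating path joining them.

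Concretely, I would define the new factorization by raising every even-position edge $e_{j+2i}$ (for $i = 0, 1, \ldots, k$) by one, lowering every odd-position edge $e_{j+2h+1}$ (for $h = 0, \ldots, k-1$) by one, and lowering each of the two boundary ancillaries $a_j$ and $a_{j+2k+1}$ by one. The hypotheses are exactly what make this legal: the assumption $b_{j+2h+1}\geq 1$ for each $h$ lets me decrease the odd-position edges without producing a negative exponent, and $a_j, a_{j+2k+1}\geq 1$ lets me spend one unit from each boundary ancillary.

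Next I would verify that this yields the same monomial via a vertex-by-vertex exponent check. At the left endpoint $x_j$ the unit lost from $a_j$ is exactly restored by the extra copy of $e_j$, and symmetrically at $x_{j+2k+1}$ the unit lost from $a_{j+2k+1}$ is restored by the extra copy of $e_{j+2k}$. At each interior vertex $x_{j+\ell}$ (for $1\leq \ell \leq 2k$) the two incident edges $e_{j+\ell-1}$ and $e_{j+\ell}$ have opposite parity, so one is raised by one while the other is lowered by one, leaving the exponent of $x_{j+\ell}$ unchanged. This degree bookkeeping closes precisely because consecutive edges alternate in parity, which is where the path structure is used.

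Finally I would count edges: there are $k+1$ even-position edges that were raised but only $k$ odd-position edges that were lowered, so the total edge exponent increases by exactly $(k+1)-k = 1$. Thus $\sum b_\ell$ was not maximal and $m$ is not in optimal form. The one genuine subtlety — the ``hard part'' — is this off-by-one parity count producing a net gain of a single edge; it is exactly the feature that would fail for an even-length run of consecutive edges, and it is why the hypothesis insists on an \emph{odd} number $2k+1$ of consecutive edges with ancillaries on both ends. I would also record that the vertices $x_j, x_{j+1}, \ldots, x_{j+2k+1}$ are taken to be distinct (the run does not wrap around the cycle), so that the two endpoint computations at $x_j$ and $x_{j+2k+1}$ are genuinely independent rather than colliding at a single vertex.
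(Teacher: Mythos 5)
Your proposal is correct and is essentially the same argument as the paper's: the alternating swap that trades one unit of each boundary ancillary plus one copy of each odd-offset edge $e_{j+1},e_{j+3},\ldots,e_{j+2k-1}$ for one extra copy of each even-offset edge $e_j,e_{j+2},\ldots,e_{j+2k}$, netting $(k+1)-k=1$ additional edge. The only difference is presentational: the paper factors out the divisor $p=x_j e_{j+1}e_{j+3}\cdots e_{j+2k-1}x_{j+2k+1}$ and invokes Lemma \ref{lem:splitm} to conclude $p$ should be optimal yet visibly is not, whereas you perform the same exchange in place on $m$ with a direct vertex-by-vertex exponent check.
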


\begin{proof}
Let $m = x_j^{a_j} e_j^{b_j} e_{j+1}^{b_{j+1}} \cdots e_{j+2k}^{b_{j+2k}} x_{j+2k+1}^{a_{j+2k+1}}$ and notice that $m$ is a string of adjacent edges with ancillaries on either end. We will show that this representation of $m$ is not optimal. 
For clarity, and without loss of generality, let $j=1$, and suppose that $b_i \geq 1$ for all evenly indexed edge exponents. 

Let $p = x_1 e_2 e_{4} \cdots e_{2k} x_{2k+2}$ and note that 
by Lemma \ref{lem:splitm}, $p$ must be in optimal form if $m$ is expressed optimally. However, 
\begin{align*}
p &= 
x_1 e_2 e_{4} \cdots e_{2k} x_{2k+2} \\
&= x_1(x_2 x_3) (x_4 x_5) \cdots (x_{2k} x_{2k+1}) x_{2k+2} \\
&= (x_1x_2) (x_3 x_4) (x_5 x_6) \cdots (x_{2k+1} x_{2k+2}) \\
&= e_1 e_3 e_5  \cdots e_{2k+1}.
\end{align*}
\end{proof}




\section{Powers of edge ideals and their structures}

We will now turn to a decomposition of $I^{(t)}$ in terms of $I^t$ and another ideal $J$ so that $I^{(t)} = I^t + J$. Our approach has numerous strengths, including the ability to easily compute the symbolic defect of $I$ for certain powers, as well as to determine which additional elements are needed to generate $I^{(t)}$ from $I^t$.


Although we will primarily focus on odd cycles in this section, we go on to show that the same underlying principles can be extended to edge ideals of other types of graphs; see Section \ref{sec:future} for more.

\begin{definition}\label{defn:weights}
Let $V'\subseteq V(G) = \set{x_1,x_2,\ldots,x_{r}}$ be a set of vertices. For a monomial $x^{\underline{a}}\in k[x_1,x_2,\ldots,x_{r}]$ with exponent vector $\underline{a} = (a_1, a_2, \ldots, a_{r})$, define the \emph{vertex weight} $w_{V'}(x^{\underline{a}})$ to be 
	\[
		w_{V'}(x^{\underline{a}}) := \sum\limits_{x_i\in V'} a_i.
	\]
\end{definition}

We will usually be interested in the case when $V'$ is a minimal vertex cover.

Using the language of vertex weights, the definition of the symbolic power of an edge ideal given in Lemma \ref{lem:jdef} becomes
\[
	I^{(t)} = (\{x^{\underline{a}} | \text{ for all minimal vertex covers } V', \, w_{V'}(x^{\underline{a}}) \geq t\}).
\]

Now define sets
\[
	L(t) = \{x^{\underline{a}} | \deg(x^{\underline{a}}) \geq 2t \text{ and for all minimal vertex covers } V', \, w_{V'}(x^{\underline{a}}) \geq t\}
	\]
and
\[
	D(t) = \{x^{\underline{a}} | \deg(x^{\underline{a}}) <2t \text{ and for all minimal vertex covers } V', \, w_{V'}(x^{\underline{a}}) \geq t\},
\]
and generate ideals $(L(t))$ and $(D(t))$, respectively.
Note that $I^{(t)} = (L(t)) + (D(t))$.
The main work of this section is to show, for the edge ideal $I$ of an odd cycle, that $I^t = (L(t))$, which is the content of Theorem \ref{lem:tl<it}.

\begin{lem}\label{lem:it sub lt}
  Let $S=k[x_1,\ldots,x_r]$, $G$ be a graph on $\set{x_1,\ldots,x_r}$, $I=I(G)$, and $L(t)$ be as defined above. Then $I^t \subseteq (L(t))$.
\end{lem}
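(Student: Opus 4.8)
The plan is to verify the containment on generators. Since $I = I(G)$ is generated by the edge-monomials $x_ix_j$ for $\set{x_i,x_j}\in E$, the power $I^t$ is generated by the monomials $\mu = e_{i_1}e_{i_2}\cdots e_{i_t}$, i.e.\ products of exactly $t$ (not necessarily distinct) edges. Thus it suffices to show that each such $\mu$ lies in $L(t)$, for then each generator of $I^t$ lies in $(L(t))$ and the containment $I^t\subseteq (L(t))$ follows at once.

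First I would record the degree. Each factor $e_{i_k}$ has degree $2$, so $\deg(\mu) = 2t$, and in particular $\deg(\mu)\geq 2t$, which is the first of the two conditions defining $L(t)$.

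The only substantive point is the vertex-weight condition. Fix an arbitrary minimal vertex cover $V'$ and write $\mu = x^{\underline{a}}$. The key move is to track the contribution of each edge-factor separately rather than summing exponents vertex-by-vertex. Writing $e_{i_k} = x_{p_k}x_{q_k}$, I would reorganize the defining sum $w_{V'}(x^{\underline{a}}) = \sum_{x_i\in V'} a_i$ as
\[
	w_{V'}(\mu) = \sum_{k=1}^{t}\big([x_{p_k}\in V'] + [x_{q_k}\in V']\big),
\]
where $[\,\cdot\,]$ denotes the $0/1$ indicator. Because $V'$ is a vertex cover, each edge $\set{x_{p_k},x_{q_k}}$ meets $V'$, so every summand satisfies $[x_{p_k}\in V'] + [x_{q_k}\in V']\geq 1$. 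Summing the $t$ terms yields $w_{V'}(\mu)\geq t$. As $V'$ was an arbitrary minimal vertex cover, $\mu$ meets both conditions defining $L(t)$, hence $\mu\in L(t)$.

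I do not anticipate a genuine obstacle here: the argument is a direct unwinding of Definition \ref{defn:weights} together with the defining property of a vertex cover. The only place to be careful is the bookkeeping in the displayed identity — specifically, confirming that the per-edge reindexing of $w_{V'}$ is valid when edges are repeated and when \emph{both} endpoints of an edge lie in $V'$ (in which case that edge contributes $2$ to the sum, which only strengthens the bound). The vertex-cover hypothesis is precisely what upgrades ``each edge contributes at least $1$'' into the clean bound $w_{V'}(\mu)\geq t$.
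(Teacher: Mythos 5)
Your proof is correct and takes essentially the same approach as the paper's: both rest on the single observation that a vertex cover meets every edge, so each edge factor contributes at least $1$ to $w_{V'}$, giving $w_{V'}\geq t$ alongside the degree bound. The only cosmetic difference is that you verify the containment on the generators of $I^t$ (products of exactly $t$ edges), whereas the paper takes an arbitrary monomial $m\in I^t$ in optimal form and bounds $w_{V'}(m)\geq b(m)\geq t$; both are valid.
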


\begin{proof}
  Suppose $m \in I^t$. 
  Write $m$ in optimal form as $m = x_1^{a_1} \cdots x_r^{a_r} \prod_{i < j} e_{ij}^{b_{ij}}$. 
  We know that given an arbitrary minimal vertex cover $V'$ and edge $e_{ij} = x_i x_j$ dividing $m$, it must be true that $x_i \in V'$ or $x_j \in V'$ or both. 
  Thus $w_{V'}(m) \geq b(m)$. 
  Further, since $m \in I^t$, we know $b(m) \geq t$ and $\deg(m) \geq 2t$, which means that $m \in (L(t))$.
\end{proof}

\begin{lem}\label{lem:m12 not in lt}
  Let $S=k[x_1,\ldots,x_r]$, $G$ be a graph on $\set{x_1,\ldots,x_r}$, $I=I(G)$, and $L(t)$ be as defined above. 
  For all $m \not\in I^t$, if $m$ has no ancillaries or a single ancillary of degree 1 then $m \not\in (L(t))$.
\end{lem}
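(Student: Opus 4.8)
The plan is to reduce the membership question for the monomial ideal $(L(t))$ to a purely numerical constraint on degrees. The key observation is that, since $(L(t))$ is a monomial ideal generated by the monomials in $L(t)$, a monomial $m$ lies in $(L(t))$ if and only if some element of $L(t)$ divides $m$. Because every element of $L(t)$ has degree at least $2t$ by definition, it therefore suffices to show that under the stated hypotheses $\deg(m) < 2t$: then no divisor $m'$ of $m$ can meet the degree threshold, since $m' \mid m$ forces $\deg(m') \leq \deg(m) < 2t$, so no element of $L(t)$ divides $m$, and hence $m \notin (L(t))$.

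To obtain the degree bound, I would first translate the hypothesis $m \notin I^t$ into the language of optimal factorizations. Since $I^t$ is generated by products of $t$ edges, a monomial lies in $I^t$ exactly when its optimal factorization uses at least $t$ edges; that is, $m \in I^t$ if and only if $b(m) \geq t$. Thus $m \notin I^t$ gives $b(m) \leq t-1$. Writing $m$ in optimal form as $m = \prod_i x_i^{a_i} \prod_j e_j^{b_j}$, the degree splits as $\deg(m) = \sum_i a_i + 2\,b(m)$, where $\sum_i a_i$ is precisely the total degree contributed by the ancillary factors.

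I would then finish with the two cases appearing in the hypothesis. If $m$ has no ancillaries, then $\sum_i a_i = 0$, so $\deg(m) = 2\,b(m) \leq 2(t-1) = 2t - 2 < 2t$. If $m$ has a single ancillary of degree $1$, then $\sum_i a_i = 1$, so $\deg(m) = 2\,b(m) + 1 \leq 2(t-1) + 1 = 2t - 1 < 2t$. In both cases $\deg(m) < 2t$, which by the reduction above yields $m \notin (L(t))$.

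The argument is essentially immediate once these pieces are assembled, so I do not anticipate a serious obstacle; the only points needing care are the equivalence $m \in I^t \Longleftrightarrow b(m) \geq t$, which rests on $b(m)$ being the maximal edge count in an optimal factorization, and the standard fact that membership of a monomial in a monomial ideal is detected by divisibility by a single generator. It is also worth flagging that the degree bound is exactly what fails once the ancillary contribution $\sum_i a_i$ is $2$ or larger, which explains why the lemma restricts to the two listed cases and foreshadows that the generators of $I^{(t)}$ lying outside $I^t$ must come from monomials carrying more ancillary degree.
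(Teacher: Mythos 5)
Your proof is correct and follows essentially the same route as the paper: both arguments reduce membership in $(L(t))$ to the existence of a divisor of degree at least $2t$, use $m\notin I^t \Rightarrow b(m)\le t-1$, and conclude $\deg(m) = \sum_i a_i + 2b(m) < 2t$ in each of the two cases. Your explicit bound $2b(m)+1 \le 2t-1$ is a slightly cleaner way of stating the paper's parity observation, but the substance is identical.
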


\begin{proof}
  If there are no ancillaries in $m$ then $\deg(m) = 2 b(m) < 2t$. 
  Thus, $m$ cannot be in $L(t)$, which also means that it is not in $(L(t))$ as none of the divisors of $m$ are in $L(t)$ for a similar reason. 
  Furthermore, we reach the same conclusion if there is only one ancillary in $m$ and it has an exponent of $1$, as $\deg(m) = 2b(m)+1 < 2t+1$, and since $2b(m)+1$ and $2t+1$ are both odd, $2b(m)+1 < 2t$.
\end{proof}

For the remainder of this section, let $I=I(C_{2n+1}) \subseteq R=  k[x_1,x_2,\ldots,x_{2n+1}]$ and $V' \subseteq V(C_{2n+1})$ be a minimal vertex cover of $C_{2n+1}$.

\begin{theorem}\label{lem:it<tl}\label{lem:tl<it}
Given $I$ and $(L(t))$ as defined above, $I^t = (L(t))$.
\end{theorem}
\begin{proof}
  By Lemma \ref{lem:it sub lt} we know that $I^t \subseteq (L(t))$ so we must only show the reverse containment. Let $m\not\in I^t$, which implies that $b(m) < t$; then we will show that $m\not\in (L(t))$. Lemma \ref{lem:m12 not in lt} allows us to consider only cases where $m$ either has multiple ancillaries or has a single ancillary of at least degree 2.

  Given an arbitrary monomial $m \not \in I^t$, let $m = x_{\ell_1}^{a_{\ell_1}} x_{\ell_2}^{a_{\ell_2}} \cdots x_{\ell_r}^{a_{\ell_r}} e_1^{b_1} e_2^{b_2} \cdots e_{2n+1}^{b_{2n+1}}$ be an optimal factorization of $m$ where $x_{\ell_q}^{a_{\ell_q}}$ is an ancillary and $1 \le \ell_1 < \ell_2 < \cdots < \ell_r \le 2n+1$.


Our goal is to show that there exists some vertex cover with a weight equal to $b(m)$, and as $b(m) < t$, $m$ cannot be in $L(t)$. Since $L(t)$ is the generating set of $(L(t))$, this will be sufficient to claim that $m \not \in (L(t))$ because neither $m$, nor any of its divisors whose vertex weights can only be less than that of $m$, will be in the generating set.



We will construct a minimal vertex cover $S$ of $C_{2n+1}$ out of a sequence of subsets $S_{1}, S_{2},\ldots, S_{r}$ of $V$, where each $S_q$ is a cover for the induced subgraph $H_q$ of $C_{2n+1}$ on $$V_{H_q} = \set{x_{\ell_q},x_{\ell_{q}+1}, \ldots, x_{\ell_{q+1}-1}, x_{\ell_{q+1}}}.$$

For the sake of simplicity, let $x_i^{a_i}$ and $x_j^{a_j}$ be a pair of consecutive ancillaries (or let $x_i^{a_i}=x_{\ell_r}^{a_{\ell_r}}$ and $x_j^{a_j}=x_{\ell_1}^{a_{\ell_1}}$ in the wraparound case, or let $x_i^{a_i} = x_{\ell_1}$ and $x_j^{a_j} = x_{\ell_1}^{a_{\ell_1-1}}$ in the case of a single ancillary with degree greater than 1).
In addition, let $m_q = x_i^{a_i} e_{i}^{b_i} e_{i+1}^{b_{i+1}} \cdots e_{j-1}^{b_{j-1}} x_{j}^{a_{j}}$. Note that by Lemma \ref{lem:splitm}, $m_q$ is in optimal form.

We will show for each subgraph ${H_q}$, there exists some set of vertices $S_q \subseteq V_{H_q}$ that covers ${H_q}$ such that $w_{S_q}(m_q) = b(m_q)$.

\textbf{Case 1:} Suppose that $V_{H_q}$ has an odd number of elements. Consider $$S_q = \{x_{i+1}, x_{i+3},\ldots,x_{j-1}\}.$$ We claim that $w_{S_q}(m_q) = b(m_q)$.  This can be shown as follows:

\begin{align*}
m_q &= x_i^{a_i} e_{i}^{b_i} e_{i+1}^{b_{i+1}} \cdots e_{j-2}^{b_{j-2}}e_{j-1}^{b_{j-1}} x_{j}^{a_{j}}\\
&= x_i^{a_i} (x_{i} x_{i+1})^{b_i} (x_{i+1} x_{i+2})^{b_{i+1}} \cdots (x_{j-2}x_{j-1})^{b_{j-2}}(x_{j-1} x_j)^{b_{j-1}} x_{j}^{a_{j}}\\
&= x_i^{(a_i + b_i)} x_{i+1}^{(b_{i} + b_{i+1})} x_{i+2}^{(b_{i+1} + b_{i+2})} \cdots x_{j-1}^{(b_{j-2} + b_{j-1})} x_{j}^{(b_{j-1} + a_{j})}.
\end{align*}


By Definition \ref{defn:weights}, 

\begin{align*}
w_{S_q}(m_q) &= (b_i+b_{i+1}) + (b_{i+2}+b_{i+3}) + \cdots + (b_{j-2}+b_{j-1})\\
&= \sum\limits_{h = i}^{j-1} b_h\\
&= b(m_q).
\end{align*}

\textbf{Case 2:} Suppose now that $V_{H_q}$ has an even number of elements. If $V_{H_q} = \set{x_i,x_j}$, then the two ancillaries are adjacent and $m$ is not in optimal form, so we know that $V_{H_q}$ contains additional vertices. 
Moreover, Lemma \ref{lem:evens} demonstrates that for some $h$ satisfying $1 \leq h \leq \frac{j-i-1}{2}$, $b_{i+2h-1} = 0$. 
 
 Consider $S_q = \{x_{i+1}, x_{i+3}, \ldots, x_{i+2h-1}, x_{i+2h}, x_{i+2h+2}, \ldots,x_{j-1}\}$. We claim that $w_{S_q}(m_q) = b(m_q)$.
We see

\begin{align*}
m_q &= x_i^{a_i} e_{i}^{b_i} e_{i+1}^{b_{i+1}} \cdots e_{j-2}^{b_{j-2}}e_{j-1}^{b_{j-1}} x_{j}^{a_{j}}\\
&= x_i^{a_i} (x_{i} x_{i+1})^{b_i} (x_{i+1} x_{i+2})^{b_{i+1}} \cdots (x_{j-2}x_{j-1})^{b_{j-2}}(x_{j-1} x_j)^{b_{j-1}} x_{j}^{a_{j}}\\
&= x_i^{(a_i + b_i)} x_{i+1}^{(b_{i} + b_{i+1})} x_{i+2}^{(b_{i+1} + b_{i+2})} \cdots x_{j-1}^{(b_{j-2} + b_{j-1})} x_{j}^{(b_{j-1} + a_{j})}.
\end{align*}

Then:

\begin{align*}
w_{S_q}(m_q) &= (b_i+b_{i+1}) + (b_{i+2}+b_{i+3}) + \cdots + (b_{i+2h-2}+b_{i+2h-1}) + (b_{i+2h-1} + b_{i+2h}) \\
&\hspace{1cm} + (b_{i+2h+1}+b_{i+2h+2}) + \cdots + (b_{j-2}+b_{j-1})\\
&= b_{i+2h-1} +  \sum\limits_{h = i}^{j-1} b_h\\
&= b_{i+2h-1} + b(m_q) \\
&= 0 + b(m_q) \\
&= b(m_q).
\end{align*}

Hence, it does not matter whether $V_{H_q}$ has an odd or even number of vertices because $w_{S_q}(m_q) = b(m_q)$ regardless.

Now, since each $S_q$ covers its respective set of vertices, the union of all of these disjoint subcovers $S = \cup S_q$ is a vertex cover of $C_{2n+1}$. In addition, as each $S_q$ is completely disjoint from any other subgraph's cover, $w_{S}(m) = \sum w_{S_q}(m_q) =\sum b(m_q)$. As each $b(m_q)$ was the number of edges that existed in that induced subgraph representation, and no two subgraphs contained any of the same edges, $\sum b(m_q) = b(m)$, the total number of edges in an optimal factorization of $m$. 
That is, we have constructed a vertex cover $S$ such that $w_{S}(m) = b(m) < t$.
Thus, $m\notin (L(t))$, and therefore $I^t = (L(t))$.
%
\end{proof}

\begin{corollary}\label{thm:sorta}
Given $I$ and $(D(t))$ as above, $I^{(t)} = I^t + (D(t))$.
\end{corollary}

\begin{proof}
Apply Theorem \ref{lem:tl<it} to the equation $I^{(t)} = (L(t)) +(D(t))$. 
\end{proof}

Now that we have proved that $I^{(t)} = I^t + (D(t))$, we will use this result to carry out various computations related to the interplay between ordinary and symbolic powers.

We close this section with a brief remark on the proof of Theorem \ref{lem:it<tl}.
Specifically, it relies on the fact that $C_{2n+1}$ is a cycle, but not that $C_{2n+1}$ is an odd cycle.
However, we focus on the odd cycle case as even cycles are bipartite, and \cite{1994:simisvasconcelosvillareal} showed that if $I$ is the edge ideal of a bipartite graph, then $I^t = I^{(t)}$ for all $t \ge 1$.

\section{Applications to Ideal Containment Questions}

Given the edge ideal $I$ of an odd cycle $C_{2n+1}$, Corollary \ref{thm:sorta} describes a structural relationship between $I^{(t)}$ and $I^t$ given any $t \ge 1$.
In this section, we will exploit this relationship to establish the conjecture of \cite{2004:worthenelliswilson}.
We then will compute the resurgence of $I=I(C_{2n+1})$ and explore the symbolic defect of various powers of $I$. 

We will begin by examining $D(t)$. 

\begin{lem}\label{lem:nozero}
For a given monomial $x^{\underline{a}}$, if there exists some $i$ such that $a_i = 0$, then $x^{\underline{a}} \not \in (D(t))$.
\end{lem}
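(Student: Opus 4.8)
The plan is to reduce the statement to a claim about the \emph{generators} of $(D(t))$ and then exhibit two explicit minimal vertex covers that force the degree to be large. Since $(D(t))$ is a monomial ideal, a monomial $x^{\underline{a}}$ lies in $(D(t))$ if and only if it is divisible by some generator $x^{\underline{d}} \in D(t)$. The first step is to observe that a zero exponent is inherited under divisibility: if $x^{\underline{d}} \mid x^{\underline{a}}$ and $a_i = 0$, then $d_i \le a_i = 0$, so $d_i = 0$ as well. Thus it suffices to prove that \emph{no} element of $D(t)$ can have a zero exponent; equivalently, if a monomial $x^{\underline{d}}$ has $d_i = 0$ and satisfies $w_{V'}(x^{\underline{d}}) \ge t$ for every minimal vertex cover $V'$, then necessarily $\deg(x^{\underline{d}}) \ge 2t$, which rules out membership in $D(t)$.

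The heart of the argument is the construction of a pair of minimal vertex covers whose weights sum to exactly $\deg(x^{\underline{d}})$. Using the cyclic symmetry of $C_{2n+1}$, I would assume without loss of generality that $d_{2n+1} = 0$, and then take $C_1 = \{x_1, x_3, \dots, x_{2n-1}, x_{2n+1}\}$ (the odd-indexed vertices) and $C_2 = \{x_2, x_4, \dots, x_{2n}, x_{2n+1}\}$. The key point is that adjoining $x_{2n+1}$, which costs nothing in weight because $d_{2n+1} = 0$, repairs the two naive ``alternating'' selections on the path $x_1 - \cdots - x_{2n}$ into genuine covers of the whole cycle: $x_{2n+1}$ handles both the wraparound edge $e_{2n+1} = x_{2n+1}x_1$ and the edge $e_{2n} = x_{2n}x_{2n+1}$. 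A direct check then shows that each of $C_1$, $C_2$ has size $n+1$ and meets every edge, so each is a minimal vertex cover.

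Next I would compute the two weights and add them. Because $C_1$ collects the odd-indexed exponents and $C_2$ the even-indexed exponents, while the shared vertex $x_{2n+1}$ contributes $d_{2n+1} = 0$ to both, the sum $w_{C_1}(x^{\underline{d}}) + w_{C_2}(x^{\underline{d}})$ telescopes to $\sum_{j=1}^{2n+1} d_j = \deg(x^{\underline{d}})$. Feeding in the hypothesis that both covers have weight at least $t$ then gives $\deg(x^{\underline{d}}) \ge 2t$, the desired contradiction with the defining inequality $\deg(x^{\underline{d}}) < 2t$ for $D(t)$.

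The only delicate point, and the step I would be most careful about, is verifying that $C_1$ and $C_2$ are genuinely vertex covers of the odd cycle, in particular that the single vertex $x_{2n+1}$ really does patch \emph{both} problematic edges at the ``seam'' of the cycle. This is exactly where the cyclic (odd-cycle) structure enters, and it is what makes the two weights partition the exponents of $x^{\underline{d}}$ cleanly. By contrast, the reduction through monomial-ideal divisibility and the final weight computation are routine once this two-cover construction is in place.
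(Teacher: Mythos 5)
Your proof is correct and follows essentially the same strategy as the paper: both arguments pick two minimal vertex covers of size $n+1$ that intersect only in the zero-exponent vertex and together partition the remaining vertices, then add the two weight inequalities to force $\deg(x^{\underline{d}}) \ge 2t$, contradicting the degree bound defining $D(t)$. Your explicit reduction from $(D(t))$ to $D(t)$ via monomial divisibility is a point the paper only gestures at with ``by extension,'' but the substance is identical.
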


\begin{proof}
Although all graphs have many different minimal vertex covers, odd cycles have a vertex cover that 
includes any two adjacent vertices and alternating vertices thereafter.

Without loss of generality, consider $x^{\underline{a}}$ and suppose $a_1=0$. Two such minimal vertex covers that include $x_1$ are $\{x_1, x_2, x_4, x_6, \ldots, x_{2n}\}$ and $\{x_1, x_3, x_5, \ldots, x_{2n+1}\}$.

In order for $x^{\underline{a}}$ to be in $D(t)$, it must be true that $w_{V'}(x^{\underline{a}}) \geq t$. This means that  $a_1 + a_2 +a_4 + \cdots + a_{2n} \geq t$ and $a_1 + a_3 + \cdots +a_{2n+1} \geq t$. Adding the inequalities yields $a_1 + (a_1 +a_2 +a_3 +\cdots + a_{2n+1}) \geq 2t$. As $a_1 = 0$, we have $\sum\limits_{i=1}^{2n+1} a_i \geq 2t$, which contradicts the requirement that $\deg(x^{\underline{a}}) < 2t$. 
Hence, any monomial $x^{\underline{a}}$ with at least one exponent equal to 0 cannot be an element of $D(t)$. 
\end{proof}


\begin{lem}\label{lem:levels}
For a given monomial $x^{\underline{a}}$ in $D(t)$, if $\deg(x^{\underline{a}}) = 2t - k$, then $x^{\underline{a}}$ is divisible by $(x_1 x_2 \cdots x_{2n+1})^k$.
\end{lem}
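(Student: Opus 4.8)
The plan is to reduce the divisibility claim $(x_1 x_2 \cdots x_{2n+1})^k \mid x^{\underline{a}}$ to the pointwise statement that $a_i \geq k$ for every $i$, and then prove each of these inequalities by summing two carefully chosen vertex-cover constraints, exactly in the spirit of Lemma \ref{lem:nozero}.

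First I would observe that $(x_1 \cdots x_{2n+1})^k$ divides $x^{\underline{a}}$ if and only if $a_i \geq k$ for all $i \in \set{1, \ldots, 2n+1}$, so it suffices to establish this lower bound coordinate by coordinate. By the cyclic (rotational) symmetry of $C_{2n+1}$, which permutes the minimal vertex covers among themselves, it is enough to treat the index $i = 1$; the argument for a general index follows by relabeling.

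Next I would reuse the two minimal vertex covers appearing in the proof of Lemma \ref{lem:nozero}, namely $V_1' = \set{x_1, x_2, x_4, \ldots, x_{2n}}$ and $V_2' = \set{x_1, x_3, x_5, \ldots, x_{2n+1}}$, each of which has size $n+1$ and hence is a genuine minimal cover. The decisive combinatorial point is that $x_1$ lies in \emph{both} covers while every other vertex $x_j$ lies in \emph{exactly one} of them. Consequently, adding the defining inequalities $w_{V_1'}(x^{\underline{a}}) \geq t$ and $w_{V_2'}(x^{\underline{a}}) \geq t$ produces $2a_1 + \sum_{j \neq 1} a_j \geq 2t$, that is, $a_1 + \deg(x^{\underline{a}}) \geq 2t$. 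Substituting $\deg(x^{\underline{a}}) = 2t - k$ then yields $a_1 \geq k$, and the same argument at every vertex gives $a_i \geq k$ for all $i$.

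There is no serious obstacle here; the lemma is a quantitative sharpening of Lemma \ref{lem:nozero}, which is essentially the degenerate case forcing a contradiction when some $a_i$ would be $0$. The only point demanding care is the covering bookkeeping, namely verifying that each vertex other than $x_1$ occurs in precisely one of $V_1'$ and $V_2'$, so that the sum of the two weights telescopes exactly to $a_1 + \deg(x^{\underline{a}})$ rather than overcounting. Once this is confirmed, the inequality $a_i \geq k$ is immediate, and the divisibility conclusion follows.
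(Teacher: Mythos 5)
Your proof is correct and follows essentially the same route as the paper: both arguments sum the weight inequalities for two alternating minimal vertex covers that intersect only in the vertex under consideration, yielding $a_i + \deg(x^{\underline{a}}) \geq 2t$ and hence $a_i \geq k$. The only cosmetic difference is that you argue directly for each $i$ via cyclic symmetry, whereas the paper argues by contradiction with an explicit case split on the parity of the offending index $i_0$.
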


\begin{proof}
Let $x^{\underline{a}} \in D(t)$ such that $\deg(x^{\underline{a}}) = 2t - k$, and suppose that $x^{\underline{a}}$ is not divisible by $(x_1 x_2 \cdots x_{2n+1})^k$. 
This means that there exists an $i_0$ such that $a_{i_0} < k$. Moreover, since $x^{\underline{a}} \in (D(t))$, we must have $a_j > 0$ for all $j$.

If ${i_0}$ is odd, consider minimal vertex covers 
\[
V_1 = \set{x_1, x_3,\ldots, x_{i_0},x_{i_0+1},x_{i_0+3},\ldots,x_{2n}}
\] 
and 
\[
V_2 = \set{x_2, x_4, \ldots, x_{i_0-1}, x_{i_0}, x_{i_0+2},x_{i_0+4},\ldots,x_{2n+1}}.
\]
If $i_0$ is even, use 
\[
V_1 = \set{x_1,x_3,\ldots,x_{i_0-1},x_{i_0},x_{i_0+2},\ldots,x_{2n}}
\]
and 
\[
V_2 = \set{x_2,x_4,\ldots,x_{i_0},x_{i_0+1},x_{i_0+3},\ldots,x_{2n+1}}.
\]
%

In order for $x^{\underline{a}}$ to be in $D(t)$, it must be true that $w_{V_j}(x^{\underline{a}}) \geq t$ for $j=1,2$. 
When $i_0$ is odd, this means that $a_1 + a_3 +\cdots + a_{i_0} + a_{i_0+1} + \cdots + a_{2n} \geq t$ and $a_2 + a_4 + \cdots + a_{i_0-1} + a_{i_0} + a_{i_0+2} +\cdots +a_{2n+1} \geq t$ (and similarly if $i_0$ is even). 
Combining these, we see $2t \le a_{i_0} + (a_1 +a_2 +a_3 +\cdots + a_{2n+1}) = a_{i_0} + \left(\sum_{s=1}^{2n+1} a_s \right) = 2t-k+a_{i_0} < 2t$, a contradiction.
%
%
%
%
%
\end{proof}


The following corollary partially answers \cite[Conjecture 15]{2004:worthenelliswilson} in the affirmative.
Note that this is a restatement of \cite[Theorem 4.13]{2017DaoEtAl}.

\begin{corollary}\label{cor:ordsymequal}
Let $I=I(C_{2n+1})$. Then $I^{(t)} = I^t$ for $1\leq t \leq n$.
\end{corollary}

\begin{proof}
%
If $m\in D(t)$, then $\deg(m) < 2t$.
Since there are $2n+1 > 2t$ variables, at least two of them must have exponents equal to 0, contradicting Lemma \ref{lem:nozero}.
Thus, $D(t) = \emptyset$.
%
\end{proof}

A recent paper of Galetto, Geramita, Shin, and Van Tuyl \cite{2016:galettogeramitavantuyl} introduced the notion of symbolic defect to measure the difference between the symbolic power $I^{(t)}$ and ordinary power $I^{t}$.
For a given $m$, the symbolic defect $\sdefect(I,m)$ is the number $\mu(m)$ of minimal generators $F_1, F_2, \ldots, F_{\mu(m)}$ such that $I^{(m)} = I^m + (F_1, F_2, \ldots, F_{\mu(m)})$.
Corollary \ref{cor:ordsymequal} thus implies that $\sdefect(I(C_{2n+1}),t) = 0$ for all $t$ satisfying $1\le t \le n$.

\begin{corollary}\label{cor:t=n+1}
Let $I=I(C_{2n+1})$. 
Then $\sdefect(I, n+1) = 1$. 
In particular, $I^{(n+1)} = I^{n+1} + (x_1x_2\cdots x_{2n+1})$.
\end{corollary}

\begin{proof}
	Let $m\in D(n+1)$.
	Then $2n+1 \le \deg(m) < 2n+2$, and by Lemma \ref{lem:nozero}, each of the $2n+1$ variables must divide $m$.
	Therefore, $m = x_1 x_2 \cdots x_{2n+1}$.
%
%
%
	Thus, $I^{(n+1)} = I^{n+1} + (x_1x_2\cdots x_{2n+1})$.
\end{proof}

Recall that, if $0\ne I\subsetneq S = k[x_1,x_2,\ldots,x_r]$ is a homogenous ideal, the minimal degree of $I$, denoted $\alpha(I)$, is the least degree of a nonzero polynomial in $I$.
In particular, if $I$ is an edge ideal, $\alpha(I) = 2$, and $\alpha(I^s) = 2s$ for any $s \ge 1$.
In general, if $\alpha(I^{(t)}) < \alpha(I^s)$, we may conclude that $I^{(t)}\not\subseteq I^s$, but the converse need not hold.
When $I = I(C_{2n+1})$, however, it does, as the next lemma demonstrates.

\begin{lem}\label{cor:mindegree}
Let $I$ be the edge ideal of an odd cycle. Then $\alpha(I^{(t)}) < \alpha(I^s)$ if and only if $I^{(t)} \not \subseteq I^s$.
\end{lem}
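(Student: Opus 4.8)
The plan is to prove the nontrivial direction: since $\alpha(I^{(t)}) < \alpha(I^s)$ always forces $I^{(t)} \not\subseteq I^s$ (a containment of ideals cannot lower the minimal degree), I would assume $I^{(t)} \not\subseteq I^s$ and produce a witness monomial in $I^{(t)}$ whose degree is strictly below $\alpha(I^s) = 2s$. The key structural input is Corollary \ref{thm:sorta}, which gives $I^{(t)} = I^t + (D(t))$, together with the defining property of $D(t)$: every element has degree strictly less than $2t$. So my first step is to observe that if $I^{(t)} \not\subseteq I^s$, then in particular $I^t \not\subseteq I^s$ is impossible when $t \ge s$ (since $I^t \subseteq I^s$ for $t \ge s$), so the failure of containment must be witnessed by a generator coming from $(D(t))$, i.e.\ by a monomial $m \in D(t)$ with $m \notin I^s$.

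Next I would extract the degree bound. Any such witness $m \in D(t)$ satisfies $\deg(m) < 2t$ by definition of $D(t)$. The remaining task is to confirm that this $m$ (or some element it helps generate) genuinely lowers $\alpha(I^{(t)})$ below $2s$. The clean way to do this is to handle the two regimes separately. If $t \le s$, then $\alpha(I^{(t)}) \le \deg(m) < 2t \le 2s = \alpha(I^s)$, and we are done immediately. If instead $t > s$, I would argue that $I^{(t)} \not\subseteq I^s$ cannot actually occur while $\alpha(I^{(t)}) \ge 2s$; here I expect to lean on the explicit description of $D(t)$, perhaps via Lemma \ref{lem:levels}, to show that the lowest-degree generators of $(D(t))$ are controlled by powers of $x_1 x_2 \cdots x_{2n+1}$, and to compare these degrees directly against $2s$.

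The main obstacle I anticipate is the $t > s$ case: there the degree of a $D(t)$-generator can be large (up to nearly $2t > 2s$), so a witness to non-containment need not automatically be a low-degree element, and I must argue that whenever containment fails the \emph{minimal degree} of $I^{(t)}$ is nonetheless dragged below $2s$. I would attack this by using Lemma \ref{lem:levels}: a degree-$(2t-k)$ element of $D(t)$ is divisible by $(x_1 \cdots x_{2n+1})^k$, and such highly divisible monomials lie in $I^s$ once $k$ is small enough relative to $s$, since divisibility by a high power of the full vertex product forces membership in the ordinary power. Thus a monomial failing to be in $I^s$ must have $k$ large, hence degree $2t - k$ small; quantifying ``small enough to fall below $2s$'' is the crux and likely requires the precise arithmetic relating $t$, $s$, $n$, and $k$. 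Once that inequality is pinned down, the witness supplies a polynomial in $I^{(t)}$ of degree below $2s$, giving $\alpha(I^{(t)}) < \alpha(I^s)$ and closing the equivalence.
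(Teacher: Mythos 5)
Your forward direction and your reduction are fine: when $t<s$ the conclusion is immediate from $\alpha(I^{(t)})\le\alpha(I^t)=2t<2s$, and when $t\ge s$ the containment $I^t\subseteq I^s$ correctly forces any witness to $I^{(t)}\not\subseteq I^s$ to be a monomial $m\in D(t)$ with $m\notin I^s$. The gap is exactly where you locate it, in the case $t>s$: you never establish that such a witness has degree below $2s$, and the route you propose for doing so does not close it. Following Lemma \ref{lem:levels}, a witness of degree $2t-k$ is divisible by $(x_1\cdots x_{2n+1})^k$; factoring $m=(x_1\cdots x_{2n+1})^k p$ and pushing the weights through shows $m\in I^{t-k}$, so $m\notin I^s$ forces $k\ge t-s+1$ and hence $\deg(m)\le t+s-1$. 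But $t+s-1\ge 2s$ as soon as $t\ge s+1$, so this arithmetic is quantitatively too weak to conclude $\deg(m)<2s$ precisely in the regime you need it. No choice of "precise arithmetic relating $t$, $s$, $n$, and $k$" along these lines will rescue it, because the bound it produces is genuinely lossy.

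The missing idea is to invoke Theorem \ref{lem:tl<it} at level $s$ rather than divisibility by the vertex product. Since $t\ge s$, every $m\in D(t)$ satisfies $w_{V'}(m)\ge t\ge s$ for every minimal vertex cover $V'$; if such an $m$ also had $\deg(m)\ge 2s$, it would lie in $L(s)\subseteq (L(s))=I^s$, contradicting that it witnesses non-containment. Hence every witness has degree strictly less than $2s$ and $\alpha(I^{(t)})<\alpha(I^s)$. This is how the paper argues, phrased contrapositively: assuming $\alpha(I^{(t)})\ge\alpha(I^s)$ one gets $t\ge s$, and then every element of $I^{(t)}$ has degree at least $2s$ and all vertex weights at least $s$, so $I^{(t)}\subseteq (L(s))=I^s$. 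The identity $I^s=(L(s))$ is the indispensable input; your proposal uses Corollary \ref{thm:sorta} only at level $t$ and never brings the power $s$ into contact with it.
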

\begin{proof}
The forward direction is clear.
For the converse, suppose that $\alpha(I^{(t)}) \geq \alpha(I^s)$, and recall
\[
	I^{(t)} = (m \mid \text{ for all minimal vertex covers } V', \, w_{V'}(m) \geq t). 
\]
As $I^t \subseteq I^{(t)}$, we note that $2t = \alpha(I^t) \ge \alpha(I^{(t)}) \ge \alpha(I^s) = 2s$.
Thus, if $m\in I^{(t)}$, $w_{V'}(m) \ge t \ge s$ and $\deg(m) \ge \alpha(I^{(t)}) \ge \alpha(I^s) = 2s$, and we observe

\begin{align*}
	I^{(t)} &\subseteq (m \mid \deg(m) \ge 2s \text{ and for all minimal vertex covers } V', \, w_{V'}(m) \geq s)\\
	&= (L(s))\\
	&= I^s,
\end{align*}

which completes the proof.
\end{proof}

Despite providing a condition which guarantees containments of the form $I^{(t)}\subseteq I^s$, Lemma \ref{cor:mindegree} does not actually compute $\alpha(I^{(t)})$, which is more delicate to compute than computing $\alpha(I^s)$.
We next adapt Lemma \ref{lem:jdef} and the linear programming approach of \cite{2016:BocciMFO} to compute it.
In order to do so, we make the following definition.

\begin{definition}\label{def:minvertexcovermatrix}
Fix a list of minimal vertex covers $V_1,V_2,\ldots,V_{r}$ for $C_{2n+1}$ such that $|V_i| \leq |V_{i+1}|$.
We define the minimal vertex cover matrix $A = (a_{ij})$ to be the matrix of 0's and 1's defined by:
\begin{equation}\label{eq:minvertmat}
	a_{ij} = \begin{cases}
		0 & \text{if $x_j\notin V_i$}\\
		1 & \text{if $x_j\in V_i$}.
	\end{cases}
\end{equation}
\end{definition}

\begin{remark}
	Note the minimum cardinality for a minimal vertex cover of $C_{2n+1}$ is $n+1$; in fact, there are $2n+1$ minimal vertex covers of size $n+1$.
	As we have seen, there do exist minimal vertex covers of size greater than $n+1$.
	These covers will be accounted for in rows $2n+2$ and higher of the minimal vertex cover matrix $A$.
\end{remark}

We first seek a lower bound of $\alpha(I^{(t)})$ using  linear programming.
Let 
\[
	t = s(n+1) + d \text{, where } 0 \leq d \leq n.
\]

Consider the following linear program ($\star$), where $A$ is the minimal vertex cover matrix, 
\[
\mathbf{b} = \left(\begin{matrix} 1 \\ \vdots \\ 1 \end{matrix}\right), \text{ and } \mathbf{c} = \left(\begin{matrix} t \\ \vdots \\ t \end{matrix}\right):
\]

\begin{tabular}{rl}
		minimize & ${\bf b}^T{\bf y} $\\
		subject to & $A{\bf y} \geq {\bf c}$ and  ${\bf y} \geq {\bf 0}.$
	\end{tabular}\hspace{2cm}$(\dagger)$

\noindent We refer to $(\star)$ as the \emph{alpha program}, and observe that if $\mathbf{y}^*$ is the value which realizes $(\dagger)$, we have $\alpha(I^{(t)}) \geq \mathbf{b}^T \mathbf{y}^*$.

Consider the following partition of $A$: let $A'$ be the submatrix of $A$ consisting of the first $2n+1$ rows (and thus corresponding to the $2n+1$ minimal vertex covers which contain exactly $n+1$ vertices) and $B$ the matrix consisting of the remaining rows of $A$.
We thus create the following sub-program of $(\dagger)$,

\begin{tabular}{rl}
		minimize & ${\bf b}^T{\bf y} $\\
		subject to & $A'{\bf y} \geq {\bf c}$ and  ${\bf y} \geq {\bf 0}.$
	\end{tabular}\hspace{2cm}$(\ddagger)$

\begin{lem}\label{lem:alphasubprog}
	The value of $(\ddagger)$ is $\frac{(2n+1)t}{n+1}$.
\end{lem}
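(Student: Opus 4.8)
The plan is to exploit the strong symmetry of the matrix $A'$. The $2n+1$ minimal vertex covers of $C_{2n+1}$ of size exactly $n+1$ are precisely the complements of the $2n+1$ maximum independent sets, and the cyclic symmetry of the cycle permutes them transitively. Consequently each column of $A'$ — equivalently, each vertex $x_j$ — lies in exactly the same number of these covers, and a double count of the incidences shows this number is $\frac{(2n+1)(n+1)}{2n+1} = n+1$. Thus $A'$ is a square $(2n+1)\times(2n+1)$ matrix of $0$'s and $1$'s in which every row sum and every column sum equals $n+1$. Establishing this regularity is the one genuinely combinatorial step, and it is where the hypothesis that $G$ is an odd cycle enters; everything afterward is formal.

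For the upper bound, I would exhibit the symmetric feasible point $\mathbf{y} = \frac{t}{n+1}\mathbf{1}$. Since each row of $A'$ sums to $n+1$, we get $(A'\mathbf{y})_i = \frac{t}{n+1}(n+1) = t$ for every $i$, so $A'\mathbf{y} = \mathbf{c}$ and the constraints of $(\ddagger)$ hold with equality, while $\mathbf{y}\ge\mathbf{0}$. Its objective value is $\mathbf{b}^T\mathbf{y} = (2n+1)\cdot\frac{t}{n+1} = \frac{(2n+1)t}{n+1}$, so the value of $(\ddagger)$ is at most $\frac{(2n+1)t}{n+1}$.

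For the matching lower bound, I would sum the $2n+1$ constraints $A'\mathbf{y}\ge\mathbf{c}$ over all rows, giving $\sum_i\sum_j a_{ij}y_j \ge (2n+1)t$. Interchanging the order of summation and using that each column sum of $A'$ equals $n+1$, the left-hand side is $\sum_j\bigl(\sum_i a_{ij}\bigr)y_j = (n+1)\sum_j y_j = (n+1)\,\mathbf{b}^T\mathbf{y}$, so $\mathbf{b}^T\mathbf{y}\ge\frac{(2n+1)t}{n+1}$ for every feasible $\mathbf{y}$. Combined with the feasible point above, this pins the value of $(\ddagger)$ at exactly $\frac{(2n+1)t}{n+1}$. (Equivalently, the symmetric dual vector $\mathbf{z}=\frac{1}{n+1}\mathbf{1}$ is feasible for the dual program and certifies the same bound by weak duality; I would present the summation version since it is self-contained.) The only real obstacle is the regularity claim, so I would take care to justify that each vertex of $C_{2n+1}$ belongs to exactly $n+1$ of the size-$(n+1)$ covers; once that is in hand, both inequalities are immediate.
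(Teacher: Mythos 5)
Your proof is correct and follows essentially the same route as the paper's: the same uniform feasible point $\mathbf{y}=\frac{t}{n+1}\mathbf{1}$ gives the upper bound, and your summation of the constraints weighted by $\frac{1}{n+1}$ is exactly the weak-duality certificate the paper obtains from the dual vector $\mathbf{x}^*=\frac{1}{n+1}\mathbf{1}$, as you yourself note. If anything, you are more careful than the paper in justifying the column-regularity of $A'$ (each vertex lying in exactly $n+1$ of the size-$(n+1)$ covers), which the paper asserts without proof.
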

\begin{proof}
	We claim that
	\begin{equation}
		\mathbf{y}^* = \left( \begin{matrix} \frac{t}{n+1} \\ \frac{t}{n+1} \\ \vdots \\ \frac{t}{n+1}\end{matrix}\right),
	\end{equation}
	a $(2n+1)\times 1$ column vector, is a feasible solution to ($\ddagger$).
	Indeed, $A'\mathbf{y}^*$ is a column vector whose entries are all $t = s(n+1)+d$, satisfying the constraint of the LP. 
	In this case, $\mathbf{b}^T \mathbf{y}^* = \frac{(2n+1)t}{n+1}$.
	
	To show that this is the value of $(\ddagger)$, we make use of the fundamental theorem of linear programming by showing the existence of an $\mathbf{x}^*$ which produces the same value for the dual linear program:
	
		\begin{tabular}{rl}
maximize & ${\bf c}^T{\bf x} $\\
subject to & $(A')^T{\bf x} \leq {\bf b}$ and  ${\bf x} \geq {\bf 0}$.
\end{tabular}
\hspace{2cm}$(\star)$
	

	\noindent Specifically, let 
	\[
		\mathbf{x}^* = \left( \begin{matrix} \frac{1}{n+1} \\ \frac{1}{n+1} \\ \vdots \\ \frac{1}{n+1} \end{matrix} \right).
	\]
	As the rows of $(A')^T$ again have exactly $n+1$ 1's, we see $(A')^T \mathbf{x}^* \leq \mathbf{b}$ is satisfied, and it is straightforward to check that $\mathbf{c}^T \mathbf{x}^* = \mathbf{b}^T \mathbf{y}^* = \frac{(2n+1)t}{n+1}$.	
\end{proof}

\begin{lem}\label{lem:alphalowerbound}
	The value of $(\dagger)$ is bounded below by $\frac{(2n+1)t}{n+1}$.
\end{lem}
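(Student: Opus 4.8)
The plan is to exploit the fact that $(\dagger)$ is nothing more than $(\ddagger)$ with additional constraints adjoined, so that relaxing those constraints can only decrease the optimal value. Recall that $A$ decomposes into the block $A'$ (its first $2n+1$ rows, corresponding to the minimal vertex covers of size $n+1$) stacked on top of $B$ (its remaining rows). The constraint $A\mathbf{y}\geq\mathbf{c}$ of $(\dagger)$ is therefore equivalent to requiring both $A'\mathbf{y}\geq\mathbf{c}$ and $B\mathbf{y}\geq\mathbf{c}$, whereas $(\ddagger)$ imposes only the first of these. Since both programs minimize the same linear functional $\mathbf{b}^T\mathbf{y}$, the lower bound should follow by combining this containment of feasible regions with Lemma \ref{lem:alphasubprog}.

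First I would observe that every feasible point of $(\dagger)$ is feasible for $(\ddagger)$: if $\mathbf{y}\geq\mathbf{0}$ satisfies $A\mathbf{y}\geq\mathbf{c}$, then in particular it satisfies $A'\mathbf{y}\geq\mathbf{c}$, which together with $\mathbf{y}\geq\mathbf{0}$ is exactly the constraint set of $(\ddagger)$. Hence the feasible region of $(\dagger)$ is contained in that of $(\ddagger)$.

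Next, for any $\mathbf{y}$ feasible for $(\dagger)$, the value $\mathbf{b}^T\mathbf{y}$ is bounded below by the optimum of $(\ddagger)$, precisely because $\mathbf{y}$ is a feasible point of the minimization problem $(\ddagger)$. Taking the infimum over all $\mathbf{y}$ feasible for $(\dagger)$ then yields that the value of $(\dagger)$ is at least the value of $(\ddagger)$, which equals $\frac{(2n+1)t}{n+1}$ by Lemma \ref{lem:alphasubprog}.

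I do not anticipate any genuine obstacle here: the argument is the elementary monotonicity principle that adding constraints to a minimization LP cannot lower its optimal value. The only mild subtlety is presentational, namely to phrase the conclusion in terms of feasible points directly, as above, rather than as a comparison of two optima; this keeps the reasoning valid without having to separately argue that $(\dagger)$ is itself feasible, and it makes the appeal to Lemma \ref{lem:alphasubprog} completely transparent.
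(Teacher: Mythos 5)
Your argument is correct and is essentially the paper's own proof: both rest on the observation that $(\dagger)$ adds constraints to $(\ddagger)$, so its feasible region shrinks and its optimal value can only increase, then invoke Lemma \ref{lem:alphasubprog}. Your version merely spells out the feasible-region containment more explicitly than the paper does.
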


\begin{proof}
	Observe that $(\dagger)$ is obtained from $(\ddagger)$ by (possibly) introducing additional constraints.
	Thus, the value of $(\dagger)$ is at least the value of $(\ddagger)$, which is $\frac{(2n+1)t}{n+1}$.
\end{proof}

\begin{proposition}\label{prop:alphas}
	For all $t\ge 1$, $\alpha(I^{(t)}) = 2t - \left\lfloor \frac{t}{n+1}\right\rfloor$.
\end{proposition}

\begin{proof}
	Let $t = s(n+1)+d$, where $0\leq d\leq n$.
	By Lemma \ref{lem:alphalowerbound}, we see that $\alpha(I^{(t)})$ is bounded below by the value of $(\dagger)$, i.e., $\alpha(I^{(t)}) \geq \frac{(2n+1)t}{n+1} = \frac{(2n+1)(s(n+1)+d)}{n+1} = (2n+1)s + 2d - \frac{d}{n+1}$.	
	As $0 \leq \frac{d}{n+1} < 1$, it's enough to find an element of degree $(2n+1)s+2d$ in $I^{(t)}$.
	We claim that 
	\[
		m = x_1^{s+d} x_2^{s+d} x_3^s x_4^s \cdots x_{2n+1}^s
	\] 
	is such an element.
	Note that any minimal vertex cover $V'$ (and hence minimal prime of $I$) will contain one of $x_1$ and $x_2$, and at least $n-1$ (if it contains both $x_1$ and $x_2$) or $n$ (if it contains only one of $x_1$ and $x_2$) other vertices.
	
	In the former case, $w_{V'}(m) \ge 2(s+d) + s(n-1) = s(n+1) + 2d \geq t$, and so $m \in I^{(t)}$.
	In the latter case, $w_{V'}(m) \ge (s+d) + sn = s(n+1) + d = t$, and again we see $m\in I^{(t)}$.
	
	Thus, $\alpha(I^{(t)})$ is an integer satisfying $(2n+1)s + 2d - \frac{d}{n+1} \leq \alpha(I^{(t)}) \leq (2n+1)s + 2d$, whence $\alpha(I^{(t)}) = (2n+1)s + 2d = 2(n+1)s+2d-s = 2t-s - \left\lfloor \frac{d}{n+1}\right\rfloor = 2t - \left\lfloor s+\frac{d}{n+1}\right\rfloor = 2t - \left\lfloor \frac{t}{n+1}\right\rfloor$.
\end{proof}

Recall that, given a nontrivial homogeneous ideal $I\subseteq k[x_1,x_2,\ldots,x_{2n+1}]$, the resurgence of $I$, introduced in \cite{2010:bocciharbourne}, is the number $\rho(I) = \sup \setof{m/r}{I^{(m)}\not\subseteq I^r}$.

\begin{theorem}\label{thm:resurgence}
If $I=I(C_{2n+1})$, then $\rho(I) = \frac{2n+2}{2n+1}$.
\end{theorem}

\begin{proof}
Let $T = \setof{m/r}{I^{(m)}\not\subseteq I^r}$, and suppose that $I^{(m)} \not \subseteq I^r$. 
By Lemma \ref{cor:mindegree}, $\alpha(I^{(m)}) < \alpha(I^r)$. 
Since we know $\alpha(I^r) = 2r$ and $\alpha(I^{(m)}) = 2m - \lfloor \frac{m}{n+1} \rfloor$ by Proposition \ref{prop:alphas}, it follows that $2m - \lfloor \frac{m}{n+1} \rfloor < 2r$, and that $2m - \frac{m}{n+1} \leq 2m - \lfloor \frac{m}{n+1} \rfloor < 2r $. Thus $2m - \frac{m}{n+1}  < 2r$, and we conclude that
$\frac{m}{r} < \frac{2n+2}{2n+1}$.

%

\textbf{Claim:} If $m/r\in T$, then $(m+2n+2)/(r+2n+1)\in T$.

\textbf{Proof of Claim:} By Lemma \ref{cor:mindegree}, it is enough to show that $\alpha(I^{(m+2n+2)}) < \alpha(I^{r+2n+1})$. 
By Proposition \ref{prop:alphas}, we have

\begin{align*}
\alpha(I^{(m+2n+2)}) &= 2(m+2n+2) - \left\lfloor \frac{m+2n+2}{n+1}\right\rfloor\\
&= 2m+4n+4 - \left\lfloor \frac{m}{n+1} + \frac{2n+2}{n+1}\right\rfloor\\
&= 4n+2 + \left( 2m - \left\lfloor \frac{m}{n+1}\right\rfloor\right)\\
&= 4n+2 + \alpha(I^{(m)}) \\
&< 2(2n+1) + \alpha(I^r) \\
&= \alpha(I^{r+n+1}).
\end{align*}

Let $m_0 = r_0 = n+1$ and $a_0 = \frac{m_0}{r_0}$ and observe that $I^{(m_0)}\not\subseteq I^{r_0}$. 
Then recursively define  $a_{k} = \frac{m_k}{r_k}$ where $m_k =  m_{k-1}+2n+2$ and $r_k = r_{k-1}+2n+1$. 
By the claim above, $a_k = m_k/r_k \in T$. 
From this recursive definition, we obtain the explicit formula $a_{k} = \frac{n+1 + k(2n+2)}{n+1 + k(2n+1)}$, and conclude 
that $\rho(I) = \frac{2n+2}{2n+1}$.

\end{proof}


Recall that Corollaries \ref{cor:ordsymequal} and \ref{cor:t=n+1} imply, for $I = I(C_{2n+1})$, that
\[
	\sdefect(I,t) = \begin{cases} 0 & \text{if } t \le n \\ 1 & \text{if } t = n+1.\end{cases}
\]

Next, we explore additional terms in the symbolic defect sequence.
Our general approach is to rely on the decomposition described in Corollary \ref{thm:sorta}.
In the parlance of our work, the symbolic defect is the size of a minimal generating set for the ideal $(D(t))$.
When $n+2\le t\le 2n+1$, the elements of $D(t)$ have degree equal to $2t-1$ by Proposition \ref{prop:alphas}, so it suffices to count them.

\begin{lem}\label{lem:edgemonomialproducts}
	Let $t$ satisfy $n+2\le t\le 2n+1$.
	Then if $m\in D(t)$, $m/(x_1 x_2 \cdots x_{2n+1})$ is the product of exactly $t-n-1$ edge monomials.
\end{lem}

\begin{proof}
	Let $m\in D(t)$, 
	Thus, $m$ is divisible by the product of at most $t-1$ edge monomials.
	Since $\deg(m)=2t-1$ and $\alpha(I^{(t)}) = 2t - 1 > 2(t-1) = \alpha(I^{t-1})$ by Lemma \ref{cor:mindegree}, $m\in I^{t-1}$, and therefore $m$ is divisible by exactly $t-1$ edge monomials.
	Thus, an optimal factorization of $m$ is
	\[
		m = x_{i_0} e_1^{b_1} e_2^{b_2} \cdots e_{2n+1}^{b_{2n+1}}, \text{ where } \sum b_i = t-1.
	\]
	That is, $m$ has a single ancillary with exponent 1.

	Without loss of generality, assume that the ancillary of $m$ is $x_1$ (if it is not, an appropriate permutation of the indices and exponents can be applied to make it $x_1$).
	Write $m = x_1 e_1^{b_1} e_2^{b_2} \cdots e_{2n+1}^{b_{2n+1}}$ and $x_1 x_2 \cdots x_{2n+1} = x_1 e_2 e_4 \cdots e_{2n}$.
	Assume that $b_{2k} = 0$ for some $k\ge 1$, and consider the minimal vertex cover
	\[
		V = \set{x_2, x_4, \ldots, x_{2k}, x_{2k+1}, x_{2k+3}, \ldots, x_{2n+1}}.
	\]
	We observe that $w_V(m) = b_1 + b_2 + \cdots + b_{2k-1} + b_{2k} + b_{2k} + b_{2k+1} + \cdots + b_{2n+1} = b_{2k} + \sum b_i = 0 + \sum b_i = t-1$, which contradicts the assumption that $m\in D(t)$.
	
	Thus, $b_{2k} \ge 1$ for all $k\ge 1$, which allows us to write
	\[
		m = (x_1 e_2 e_4 \cdots e_{2n}) e_1^{b_1} e_2^{b_2-1} e_3^{b_3} \cdots e_{2n}^{b_{2n}-1} e_{2n+1}^{b_{2n+1}}.
	\]
	Finally, we see that
	\[
		m/(x_1 x_2\cdots x_{2n+1}) = e_1^{b_1} e_2^{b_2-1} e_3^{b_3} \cdots e_{2n}^{b_{2n}-1} e_{2n+1}^{b_{2n+1}}
	\]
	is the product of $t-n-1$ edge monomials.
\end{proof}

\begin{theorem}\label{thm:sdefects}
	Let $I = I(C_{2n+1})$.
	Then, for $t$ satisfying $n+2\le t \le 2n+1$, we have
	\[
		\sdefect(I,t) = \multichoose{2n+1}{t-n-1}, 
	\]
	where $\multichoose{2n+1}{t-n-1}$ denotes multichoose.
\end{theorem}

\begin{proof}
	As stated above, we wish to compute the size of a minimal generating set of $(D(t))$.
	For $n+2\le t\le 2n+1$, this means counting the number of monomials of degree $2t-1$ in $D(t)$.

	By Lemma \ref{lem:edgemonomialproducts}, the monomial $p = m/x_1 x_2 \cdots x_{2n+1}$ is the product of exactly $(t-1) - n$ edge monomials.
	 Thus, we may factor any $m\in D(t)$ as $m = x_1 x_2 \cdots x_{2n+1} p$, where $p$ is the product of exactly $t-n-1$ edge monomials.

	Conversely, suppose that $m = x_1 x_2 \cdots x_{2n+1} p$, where $p$ is the product of exactly $t-n-1$ edge monomials.
	Since $\deg(m) = 2t-1$ and, if $V$ is any minimal vertex cover of $C_{2n+1}$, we have $w_V(m) = w_V(x_1 x_2 \cdots x_{2n+1}) + w_V(p) \ge n+1 + t-n-1 = t$, where $w_V(p)\ge t-n-1$ follows from the fact that $p\in I^{t-n-1}$ by definition; thus, $m\in D(t)$.
	Therefore, to count the monomials in $D(t)$, it suffices to count all monomials $p$ that are products of $t-n-1$ edge monomials.

	We can visualize this problem by counting the number of ways to place these $t-n-1$ `edges' around the cycle, assuming that we can place multiple edges between the same two vertices. 
	By definition, this is
	\[
		\sdefect(I,t) = \multichoose{2n+1}{t-n-1}.
	\]	
	%
\end{proof}

In particular, 

\[
	\sdefect(I(C_{2n+1}),n+2) = 2n+1.
\]

\section{An additional containment question}\label{sec:future}
Our proof that $I^{(t)} = I^t + (D(t))$ does not hold for any graph other than a cycle, as it relies on the fact that each path between ancillaries is disjoint from every other path. This is not true in general.
This leads naturally to the following question.

\begin{question}\label{conj:anygraph}
	Let $G$ be a graph on the vertices $V = \set{x_1,x_2,\ldots,x_d}$ containing an odd cycle. 
	Suppose $I = I(G)$ is the edge ideal of $G$ in $R = k[x_1,x_2,\ldots,x_d]$, and let $L(t)$ and  $D(t)$ retain their usual definitions with respect to $G$.
	Does $I^{(t)} = I^t + (D(t))$ for all $t \ge 1$?
\end{question}

The following example answers Question \ref{conj:anygraph} in the negative.

\begin{ex}
	Consider the graph $G$ defined by $V(G) = \set{x_1, x_2, x_3, x_4, x_5, x_6, x_7}$ and $E(G) = \set{x_1 x_2, x_2 x_3, x_3 x_4, x_4 x_5, x_5 x_1, x_1 x_6, x_6 x_7}$, and let $m = x_1^2 x_2^2 x_3^2 x_4^2 x_5^2 x_7^2$.
	Observe that $m\notin I^6$, but as every minimal vertex cover $V$ of $G$ contains three of $x_1, x_2, x_3, x_4, x_5$, we have $w_V(m) \ge 2\cdot 3 = 6$.
	Thus, $I^6 \ne (L(6))$.
\end{ex}

However, we observe in the following two theorems that $I^t = (L(t))$ for certain classes of graphs.


One case in which Question \ref{conj:anygraph} holds is the case in which $G$ is an odd cycle with one additional vertex connected to exactly one vertex of the cycle. 

\begin{theorem}
Let $G$ be a graph consisting of $2n+2$ vertices and $2n+2$ edges such that $2n+1$ of them form a cycle and the remaining edge connects the remaining vertex to any existing vertex of the cycle. Further, let $I$ be the edge ideal of $G$ and let $L(t)$ and $D(t)$ retain their usual definitions with respect to $G$. Then $I^{(t)} = I^t + (D(t))$.
\end{theorem}
\begin{proof}
Without loss of generality, consider the cycle formed by $x_1, \ldots, x_{2n+1}$ with $e_{2n+2} = x_1 x_{2n+2}$ being the newly added edge. 

Let $m$ be a monomial expressed in optimal form $m = x_1^{a_1} x_2^{a_2} \cdots x_{2n+2}^{a_{2n+2}} e_1^{b_1} e_2^{b_2} \cdots e_{2n+2}^{b_{2n+2}}$, and recall that $b(m) = \sum b_i$.
As with the cycle, if $I^t = (L(t))$, it will follow that $I^{(t)} = I^t + (D(t))$.


By Lemma \ref{lem:it sub lt} we know that $I^t \subseteq (L(t))$ so we must only show the reverse containment. Let $m\not\in I^t$ (which implies that $b(m) < t$). 
Lemma \ref{lem:m12 not in lt} allows us to consider only cases where $m$ either has multiple ancillaries or has a single ancillary of degree at least 2.
We will construct a minimal vertex cover $V'$ of $G$ such that $w_{V'}(m) = b(m) < t$.

First, assume that $x_{2n+2}$ is the only ancillary of $m$, and observe that $a_{2n+2} \ge 2$. 
We may write $m$ as $m = x_{2n+2} e_1^{b_1} e_2^{b_2} \cdots e_{2n+2}^{b_{2n+2}} x_{2n+2}^{a_{2n+2} - 1}$. 
It cannot be true that $b_{i} \geq 1$ for all $i\in \{1,3,5,\ldots, 2n+1\}$, because it would then be possible to divide $m$ by some monomial $p = x_{2n+2} e_1 e_3\cdots e_{2n+1} x_{2n+2}$ which must be in optimal form by Lemma \ref{lem:splitm}; however, in this case, $p = e_{2n+2} e_2 e_4 \cdots e_{2n} e_{2n+2}$, contradicting that $p$ was in optimal form. 
Thus, at least one $b_{2j+1}$ is 0.
Then construct $V'$ as follows:


\begin{enumerate}
    \item If $b_1 = 0$, 
    let $$V' = \set{x_1, x_2, x_4, \ldots, x_{2n}}.$$ Then $w_{V'}(m) = (b_{2n+1}+b_{2n+2}+b_1) + (b_1+b_2) + (b_3+b_4) + \cdots + (b_{2n-1}+b_{2n}) = b_1 + \sum_{i=1}^{2n+2} b_i = 0 + \sum_{i=1}^{2n+2} b_i = b(m) < t$.
    \item If $b_{2j+1} = 0$ for some $j > 0$, let $$V' = \set{x_1, x_3, x_5, \ldots, x_{2j+1}, x_{2j+2}, x_{2j+4}, x_{2j+6}, \ldots, x_{2n}}.$$
     Then $w_{V'}(m) = (b_{2n+1}+b_{2n+2}+b_1) + (b_2+b_3) + \cdots + (x_{2j}+x_{2j+1}) + (x_{2j+1}+x_{2j+2}) + \cdots + (b_{2n-1}+b_{2n}) = b_{2j+1} + \sum_{i=1}^{2n+2} b_i = 0 + \sum_{i=1}^{2n+2} b_i = b(m) < t$.
\end{enumerate}

Now suppose that all ancillaries of $m$ are among the set $\set{x_1, x_2, \ldots, x_{2n+1}}$. 
By adapting the argument from Theorem \ref{lem:tl<it}, we may assume that there is either one ancillary with exponent at least 2, or that there are multiple ancillaries.
Use the construction in the proof of Theorem \ref{lem:tl<it} to decompose the subgraph $C_{2n+1}$ of $G$ as $H_1, H_2, \ldots, H_r$. 
Define $m_{C_{2n+1}} = x_1^{a_1} \cdots x_{2n+1}^{a_{2n+1}} e_1^{b_1} \cdots e_{2n+1}^{b_{2n+1}}$, i.e., $m_{C_{2n+1}} = m/(x_{2n+2}^{a_{2n+2}} e_{2n+2}^{b_{2n+2}})$.
The proof of Theorem \ref{lem:tl<it} provides minimal subcovers $S_1, S_2, \ldots, S_r$ such that $S = \cup S_q$ and $w_S(m_{C_{2n+1}}) = \sum_{i=1}^{2n+1} b_i$.

If $x_1\in S$, then $S$ covers $G$ and $w_S(m) = w_S(m_{C_{2n+1}}) + b_{2n+2} = \sum_{i=1}^{2n+2} b_i = b(m) < t$.
In this case, we may let $V' = S$.

On the other hand, if $x_1\notin S$, let $V' = S\cup \set{x_{2n+2}}$.
Then $w_{V'}(m) = w_S(m) + b_{2n+2} = \sum_{i=1}^{2n+2} b_i = b(m) < t$.

Next, assume that the ancillaries of $m$ are $x_{2n+2}$ and at least one $x_j$ in the cycle (where $j\ne 1$; if $j= 1$, we may write $x_{2n+2} x_1 = e_{2n+2}$, contradicting the assumption that $m$ is in optimal form). 
Use the construction of Theorem \ref{lem:tl<it} to decompose the cycle into subgraphs $H_1, H_2, \ldots, H_r$ and note that $x_1$ is a vertex in $H_r$. 
Observe  that since $x_1$ is not ancillary, $x_1\notin H_i$ for any $i\ne r$.
Let the vertices of $H_i$ be represented by $\set{x_{\ell_{i}}, x_{\ell_{i}+1}, \ldots, x_{\ell_{i+1}}}$, where $x_{\ell_1}, \ldots, x_{\ell_r}$ are ancillaries, and we wrap around with $x_{\ell_{r+1}}$ representing $x_{\ell_1}$.
For all $i \ne r$, the proof of Theorem \ref{lem:tl<it} gives a construction of a minimal vertex subcover $S_i$ with the required properties.
Now construct a subgraph $H_r'$ of $G$ as follows: $V(H_r') = V(H_r) \cup \set{x_{2n+2}}$ and $E(H_r') = E_{H_r} \cup \set{\set{x_1, x_{2n+2}}}$.
Decompose $H_r'$ as two induced subgraphs $H_{r_a}'$ and $H_{r_b}'$ of $G$ on the vertices $\set{x_{\ell_r}, \ldots, x_1, x_{2n+2}}$ and $\set{x_{2n+2}, x_1, \ldots, x_{\ell_1}}$.
We observe that we may now use the construction in the proof of Theorem \ref{lem:tl<it} to build minimal covers of $H_{r_a}'$ and $H_{r_b}'$ containing $x_1$ (and not $x_{2n+2}$) whose union gives a cover $S_r$ of $H_r'$.
Given $m_r = x_{\ell_r}^{a_{\ell_r}} x_{\ell_{1}}^{a_{\ell_{1}}} x_{2n+2}^{a_{2n+2}} e_{\ell_r}^{b_{\ell_r}} e_{\ell_r+1}^{b_{\ell_r + 1}} \cdots e_{2n+2}^{b_{2n+2}} e_1^{b_1} \cdots e_{\ell_1-1}^{b_{\ell_1-1}}$, note that $w_{S_r'}(m_r) = b(m_r)$.
Then the union $V' = \cup_{i=1}^r S_i$ has the required property that $w_{V'}(m) = b(m)$.

In all cases, $m\notin L(t)$, and, by extension, $m\notin (L(t))$.

\end{proof}

We also verify that the answer to Question \ref{conj:anygraph} is positive when $G$ is a complete graph.
Thus, additional study is needed to identify the precise graph-theoretic property for which Question \ref{conj:anygraph} has an affirmative answer.

\begin{theorem}
  Let $R=k[x_1,\ldots,x_n]$ and let $K_n$ denote the complete graph on $\set{x_1,\ldots,x_n}$. 
  Further, let $I=I(K_n)$ and $L(t)$ and $D(t)$ maintain their definitions as above. Then $I^{(t)} = I^t + (D(t))$
\end{theorem}

\begin{proof}
  Let $e_{i,j}$ denote the edge between $x_i$ and $x_j$ such that $i<j$. 
  We will show that $I^t = (L(t))$.
  By Lemma \ref{lem:it sub lt}, we must only show $(L(t))\subseteq I^t$.
  Let $m\not\in I^t$ (which implies that $b(m) < t$), and recall that Lemma \ref{lem:m12 not in lt} allows us to consider only cases where $m$ either has multiple ancillaries or has a single ancillary of at least degree 2. 
  Let $m=x_1^{a_1}\cdots x_n^{a_n}e_{1,2}^{b_{1,2}}\cdots e_{n-1,n}^{b_{n-1,n}}$ be in optimal form. Then $m$ has at most $1$ ancillary because if $x_i^{a_i}$ and $x_j^{a_j}$ were both ancillaries, then $m$ could be expressed as $$m=x_1^{a_1}\cdots x_i^{a_i-1}\cdots x_j^{a_j-1}\cdots x_n^{a_n}e_{1,2}^{b_{1,2}}e_{1,3}^{b_{1,3}}\cdots e_{i,j}^{b_{i,j}+1}\cdots e_{n-1,n}^{b_{n-1,n}}.$$ 
  Thus $m$ has exactly $1$ ancillary and it must have a degree of at least 2.
  
  Without loss of generality, let $x_1^{a_1}$ be the ancillary of $m$. Note that $b_{i,j}=0$ if $i,j\not=1$. If this was not the case, $m$ could be expressed in a more optimal form as $$m=x_1^{a_1-2}\cdots x_n^{a_n}e_{1,2}^{b_{1,2}}e_{1,3}^{b_{1,3}}\cdots e_{1,i}^{b_{1,i}+1}\cdots e_{1,j}^{b_{1,j}+1}\cdots e_{i,j}^{b_{i,j}-1}\cdots e_{n-1,n}^{b_{n-1,n}}.$$
  Let $V' = \set{x_2,\ldots, x_n}$. 
  Observe that $V'$ covers $K_n$ and
\begin{align*}
  w_{v'}(m) &= w_{v'}(x_2^{b_{1,2}+b_{2,3}+b_{2,4}+\cdots+b_{2,n}} + x_3^{b_{1,3}+b_{2,3}+b_{3,4}+\cdots+b_{3,n}} + \cdots + x_n^{b_{1,n}+b_{2,n}+b_{3,n}+\cdots+b_{n-1,n}}) \\
  &= w_{v'}(x_2^{b_{1,2}+0+\cdots+0} + x_3^{b_{1,3}+0+\cdots+0} + \cdots + x_n^{b_{1,n}+0+\cdots+0}) \\
  &= \sum_{j=2}^n b_{1,j} \\
  &= \sum_{j=2}^n b_{1,j} + \sum_{i=2}^{n-1}\sum_{j=i+1}^n b_{i,j} \\
  &= \sum_{i=1}^{n-1}\sum_{j=i+1}^n b_{i,j} \\
  &= b(m).
\end{align*}
Thus $w_{v'}(m) = b(m) < t$, so $m \not\in L(t)$ and by the same argument, no divisor of $m$ is in $L(t)$, which means $m \not\in (L(t))$. 

Therefore, $I^t = (L(t))$. 
Because $I^{(t)} = (L(t)) + (D(t))$ by Corollary \ref{thm:sorta}, this leads to the desired result that $I^{(t)} = I^t + (D(t))$.
\end{proof}

\subsection*{Acknowledgements} This work was supported by Dordt College's summer undergraduate research program in the summer of 2017. All three authors wish to express deep gratitude to the Dordt College Office of Research and Scholarship for the opportunity to undertake this project.
The authors also wish to thank the referee for the many helpful comments on their work. In particular, the proof of Theorem \ref{thm:sdefects} was improved substantially by the referee's suggestions.

\bibliography{summer17.bib}{}
\bibliographystyle{plain}

\end{document}